\documentclass[12pt]{article}
\usepackage[top=1in, bottom=1in, left=1in, right=1in]{geometry}

\usepackage{xcolor}
\usepackage{geometry}
\usepackage{amssymb,bm}
\usepackage{amsmath}
\usepackage{amsthm}
\usepackage{graphicx}
\usepackage{hyperref}
\usepackage{caption}
\usepackage{float} 
\hypersetup{pdfborder=0 0 0}
\usepackage{calrsfs}
\usepackage{booktabs}
\usepackage{physics}

\renewcommand{\theequation}{{\thesection}.{\arabic{equation}}}

\newtheorem{theorem}{{\sc Theorem}}[section]

\newtheorem{lemma}[theorem]{{\sc Lemma}}
\newtheorem{corollary}[theorem]{Corollary}
\newtheorem{remark}[theorem]{Remark}

\newcommand\restr[2]{{ \left.\kern-\nulldelimiterspace #1 \vphantom{\big|} \right|_{#2}}}
\newcommand{\RR}{\mathbb{R}}
\newcommand{\ZZ}{\mathbb{Z}}
\newcommand{\CC}{\mathbb{C}}

\newcommand{\n}{\noindent}
\newcommand{\comment}[1]{}

\DeclareMathAlphabet{\pazocal}{OMS}{zplm}{m}{n}

\newcommand{\PC}{\pazocal{C}}
\newcommand{\CT}{\pazocal{T}}

\newcommand{\CS}{\pazocal{S}}
\newcommand{\CR}{\pazocal{R}}
\newcommand{\CL}{\pazocal{L}}

\usepackage{bbm}
\usepackage{subcaption}

\newcommand{\ce}{\operatorname{ce}}
\newcommand{\se}{\operatorname{se}}
\newcommand{\Ce}{\operatorname{Ce}}
\newcommand{\Se}{\operatorname{Se}}

\renewcommand{\Res}{\operatorname*{Res}}

\title{Higher-order necessary conditions for nonscattering}
\author{
Narek Hovsepyan\footnote{School of Mathematical and Statistical Sciences, University of Texas Rio Grande Valley, Edinburg, TX, USA (narek.hovsepyan@utrgv.edu)}}
\date{}

\begin{document}
\maketitle

\begin{abstract}
We study the scattering of incident waves by a class of planar analytic inhomogeneities with constant refractive index, governed by the Helmholtz equation. We derive an infinite hierarchy of necessary conditions for nonscattering: any complex analytic nonscattering incident wave must satisfy infinitely many differential constraints at a specific point determined by the geometry of the inhomogeneity. As an application, we show that for any noncircular elliptical inhomogeneity, every single separated (Mathieu) mode necessarily scatters. This is in stark contrast to the circular case, where single separated (Bessel) modes are nonscattering at infinitely many wavenumbers.
\end{abstract}

\section{Introduction} \label{SECT Prelim}
\setcounter{equation}{0}

Inverse scattering seeks to recover geometric or material information about an inhomogeneity $D$ from the field it scatters. A basic question is therefore whether a given incident wave $v$ at a fixed wavenumber $k$ necessarily produces a nontrivial scattered field, or whether nonscattering can occur. A wavenumber $k$ is called nonscattering if there exists a nontrivial incident wave $v$ whose corresponding scattered field vanishes identically. At such a wavenumber, the relative scattering operator is not injective \cite{CK19}, which may lead to the failure of certain reconstruction methods. The nonscattering question is therefore fundamental both for uniqueness in inverse problems and for wave-based imaging \cite{CK19, CCH23}. Nonscattering is closely related to the transmission eigenvalue problem. If $k$ is a nonscattering wavenumber, then the restriction of the incident wave $v$ to $D$ and the corresponding total field satisfy their respective Helmholtz equations in $D$ and share the same Cauchy data on $\partial D$. If we now regard $v$ as an unknown defined only in $D$, rather than the restriction of an incident wave, we obtain the transmission eigenvalue problem. Hence, the set of nonscattering wavenumbers is a (possibly empty) subset of the real transmission eigenvalues. Although infinitely many real transmission eigenvalues are known to exist under broad assumptions \cite{CGH, CCH23}, being a transmission eigenvalue is far from sufficient for nonscattering: the $v$-component of a transmission eigenfunction is defined only in $D$, whereas nonscattering requires it to extend to a global solution of the background Helmholtz equation. We refer to \cite{CV26} for a recent survey on transmission eigenvalues and nonscattering.

Boundary geometry places strong constraints on nonscattering. In particular, for singular inhomogeneities the picture is by now relatively well understood: broadly speaking, singularities lead to scattering. This was established through a sequence of corner-scattering results \cite{BPS,EH15,PSV17, B18, EH,LHY18}, which also led to single-measurement uniqueness results showing that a convex polygonal inhomogeneity can be uniquely determined from the scattering data generated by a single incident wave at a fixed frequency \cite{EH15, HuSaloVesalainen2016} (see also \cite{blas}). More generally, free-boundary methods were used in \cite{CV} to show that for Lipschitz inhomogeneities nonscattering generically forces additional boundary regularity (the initial Lipschitz assumption was relaxed in \cite{SS21} to allow for certain cusps). In particular, for constant refractive index, if the boundary $\partial D$ is not real analytic at a point $\bm{x}_0$, then any incident wave satisfying $v(\bm{x}_0)\neq0$ must scatter. More recently, \cite{SS25} removed the nonvanishing condition $v(\bm{x}_0)\neq0$ under stronger a priori assumptions on $D$. For related anisotropic results we refer to \cite{CX21, CVX23, KSS24, KSS26} and the references therein.

Much less is understood about the scattering properties of real analytic inhomogeneities. For the disk, it is well known that there exists a countably infinite set of nonscattering wavenumbers with associated Herglotz incident waves \cite{CK19,VX} (see also Section~\ref{SECT app ellipse}). A natural question is therefore whether a bounded simply connected domain with real analytic boundary that is not a disk, and with constant refractive index, can admit a nontrivial incident wave that does not scatter. This question has only recently begun to be studied \cite{HV1,HV2} and the present work continues this investigation. We also mention \cite{VX, VX2, X26}, which establish for various classes of smooth inhomogeneities, finiteness of the set of nonscattering wavenumbers associated with a fixed Herglotz incident wave.

This work is a continuation of our work \cite{HV1}, where we studied planar analytic domains whose boundary parametrization
$g(t)=x_1(t)+ix_2(t)$, $t\in[-\pi,\pi]$, extends to a complex analytic function of $t$ in a domain $\CT\subset\CC$ and has a simple critical point $t_0\in\CT$. Under a suitable topological assumption--namely, that there exists a contour joining the endpoints $\pm\pi$, passing through $t_0$ and contained in $\CT$, along which $\Re g$ attains its maximum at $t_0$--we used the method of steepest descent to derive two necessary conditions for the nonscattering of a general incident wave admitting a complex analytic extension. The first-order condition, which was the main focus of that work, has a particularly simple form: it requires the incident wave $v$ to vanish at the point $\bm{x}_0=\left(x_1(t_0),x_2(t_0)\right)\in\CC^2$.
The second-order condition, however, is considerably more complicated and is given in \cite{HV1} by an involved expression in terms of $v$. In the present work, we derive an infinite hierarchy of necessary conditions and uncover a fairly simple (reduced) structure: namely, any complex analytic nonscattering incident wave $v$ must satisfy

\begin{equation} \label{L_Nv = 0}
L_N v (\bm{x}_0) = 0 \qquad \text{for all} \ N=0,1,...,
\end{equation}

\n where $L_N$ is a linear partial differential operator in two variables of order $2N$ with a particular structure (cf. Theorem~\ref{THM reduced}), whose coefficients depend on the domain $D$, its constant refractive index and the wavenumber $k$. We also provide these higher-order necessary conditions in unreduced form, as a closed-form expression involving derivatives of an explicit function. In fact, these are the formulas used to derive the reduced conditions \eqref{L_Nv = 0}. Moreover, they can be used to determine the operator $L_N$ explicitly for any $N$. In particular, $L_0=\operatorname{Id}$ and the second-order condition, which involves $L_1$ takes a simpler form, allowing us to apply it in new settings not considered in \cite{HV1}. Specifically, when $D$ is a truly elliptical inhomogeneity (i.e., not a disk), we show that an incident wave consisting of a single separated mode (given in terms of Mathieu functions) always scatters. This is in stark contrast to the disk case, where an incident wave consisting of a single separated mode (given in terms of Bessel functions) is known to be nonscattering at infinitely many distinct positive wavenumbers. We also compute the third-order condition involving the operator $L_2$ (we only state the result of this computation, which was carried out using Maple and is available in \cite{maple}). We remark that the operators $L_0$ and $L_1$ are independent of the refractive index and the wavenumber $k$. These parameters first appear in the operator $L_2$. We further observe that a critical point of $h(t)=x_1(t)-ix_2(t)$ plays an equally important role. Under an analogous topological assumption, such a critical point gives rise to its own infinite hierarchy of necessary conditions for nonscattering, imposed at the corresponding point. In particular, for a truly elliptical inhomogeneity this yields a second hierarchy analogous to \eqref{L_Nv = 0}, imposed at a point $\tilde{\bm{x}}_0$ symmetric to $\bm{x}_0$.  

The infinite hierarchy \eqref{L_Nv = 0} further illustrates the highly constrained and delicate nature of nonscattering in nonradial geometries and raises the question of whether such nonscattering is possible at all. This is a natural question in inverse scattering and is of additional interest in view of the known analogy between the nonscattering problem and the Schiffer, or Pompeiu, problem (see Remark~\ref{REM Schiff}).

\section{Preliminaries} \label{SECT Prelim}
\setcounter{equation}{0}

\n Consider a nontrivial incident wave $v$ at a fixed wavenumber $k>0$, satisfying

\begin{equation} \label{v Helm}
\Delta v + k^2 v = 0 \qquad \qquad \text{in} \ \RR^2.
\end{equation}

\n Let $D$ be a bounded, simply connected, smooth planar domain with constant refractive index $0<q_0\neq 1$. Let $u^{\text{tot}}$ denote the total field in the presence of the inhomogeneity $D$ and the incident wave $v$ and set $u=u^{\text{tot}}-v$. We say that $v$ is nonscattering for $(D,q_0)$, or simply nonscattering (since $D$ and $q_0$ will be fixed and clear from the context), if and only if $u=0$ outside $D$. Equivalently, this means that the following problem admits a solution in $D$:

\begin{equation} \label{u system}
\begin{cases}
\Delta u + k^2 q_0 u = k^2(1-q_0)v
& \text{in} \ D,
\\
u = \partial_{\bm{n}}u = 0
& \text{on} \ \Gamma=\partial D,
\end{cases}
\end{equation}

\n where $\bm{n}$ denotes the unit outer normal to $\Gamma$. Throughout, we use boldface notation for vector quantities. It will be convenient to introduce the wavenumber in the medium $D$ by

\begin{equation} \label{kappa}
\kappa = k\sqrt{q_0}.
\end{equation}

\n Multiplying the PDE in \eqref{u system} by a test function $\phi$ and using Green's identity, we obtain

\begin{equation*} 
\int_D v \phi d\bm{x} = 0 \qquad \qquad \text{for any} \quad \Delta \phi + \kappa^2 \phi = 0.
\end{equation*}

\n We now take as a test function a plane wave with a complex wave vector. Namely, let $\phi(\bm{x})=e^{i\bm{x}\cdot\bm{\zeta}}$, where $\cdot$ denotes the inner product of $\RR^2$ and for $\phi$ to satisfy the required PDE, we must have $\bm{\zeta}\cdot\bm{\zeta}=\kappa^2$. Thus, we obtain the following necessary condition for $v$ to be nonscattering:

\begin{equation} \label{v exp 0}
\int_D v e^{i \bm{x} \cdot \bm{\zeta}} d\bm{x} = 0 \qquad \qquad \forall \, \bm{\zeta} \in \CS_\kappa = \left\{ \bm{\zeta} \in \CC^2 : \bm{\zeta} \cdot \bm{\zeta} = \kappa^2 \right\}.
\end{equation}

\begin{remark} \label{REM Schiff}
\normalfont

Let us comment on the analogy between the above formulation and the failure of the Pompeiu (or equivalently, Schiffer) property. We refer to \cite{zalc1} for a survey of the Pompeiu problem, see also the appendix of \cite{HV1}. Here we recall an equivalent characterization of the failure of the Pompeiu property: $D$ fails to have the Pompeiu property iff there exists $\kappa>0$ such that

\begin{equation} \label{pomp}
\int_D e^{i \bm{x} \cdot \bm{\zeta}} d\bm{x} = 0 \qquad \qquad \forall \, \bm{\zeta} \in \CS_\kappa.
\end{equation}

\n The analogy is now evident: if in the nonscattering problem we formally replace $v$ by a constant, then we recover the failure of the Pompeiu property with $\kappa=k\sqrt{q_0}$. Of course, $v$ cannot be constant, since it must satisfy the Helmholtz equation with wavenumber $k>0$. It is well known that the disk fails to have the Pompeiu property and the celebrated Pompeiu conjecture stated that, modulo sets of measure zero, every other simply connected planar domain has the Pompeiu property. This conjecture was recently disproved in \cite{CS, conject}.
\end{remark}

To obtain necessary conditions for nonscattering from the integral condition \eqref{v exp 0}, we choose a parametrized family of test vectors $\bm{\zeta}\in\CS_\kappa$. Under appropriate assumptions on $D$ and an analyticity assumption on $v$ (cf. Section~\ref{SECT main results}), we derive a full asymptotic expansion of the resulting integral as the parameter tends to infinity. If $v$ is nonscattering, then all the coefficients in this expansion must vanish. These are precisely the higher-order necessary conditions for nonscattering, which can be reduced to the form \eqref{L_Nv = 0} mentioned in the introduction. Conversely, if any one of these infinitely many coefficients is nonzero, then $v$ must scatter. To this end, let us choose vectors from $\CS_\kappa$ parametrized by $\lambda > 0$ and defined by

\begin{equation} \label{xi lambda}
\bm{\zeta}(\lambda) = (- i \lambda, \sqrt{\lambda^2 + \kappa^2}).
\end{equation}

\n We then reduce the integral in \eqref{v exp 0} to an integral over the boundary $\Gamma=\partial D$, using the equations satisfied by $v$ and $\phi$ (cf. \cite{HV1} for details). In summary, if $v$ is nonscattering, then

\begin{equation} \label{I cali}
I(\lambda) := \int_{\Gamma} \bm{n} \cdot \left( \nabla v - i \bm{\zeta}(\lambda) v \right) e^{i \bm{x} \cdot \bm{\zeta}(\lambda)} ds = 0 \qquad \qquad \forall \, \lambda > 0.
\end{equation}

Let us now summarize and discuss the structure of the paper. Our goal is to obtain the full asymptotic expansion of $I(\lambda)$ as $\lambda\to+\infty$. This will be achieved using the method of steepest descent, and our assumptions on $D$ and $v$ are precisely those needed to ensure that this method is applicable. The integral $I$ is not in a form to which the steepest descent method is readily applicable. As already discussed, in \cite{HV1} we singled out its dominant parts and obtained the leading two terms in the expansion. In this work, we study this integral more closely. Roughly speaking, we rewrite it as an infinite sum of integrals, to each of which the steepest descent method is readily applicable and yields a full asymptotic expansion. Combining all these expansions gives the desired full expansion of $I$ (cf. Section~\ref{SECT unreduced}). We obtain the coefficients of this expansion in closed form, expressed solely in terms of derivatives of an explicitly defined auxiliary function $G$, whose dependence on the incident wave $v$ is also explicit (cf. Theorem~\ref{THM 1} and Remark~\ref{REM residue}). Consequently, the differentiations can be carried out to obtain the coefficient at any desired order explicitly. Nonscattering requires all of these coefficients to vanish, and we refer to the resulting conditions as the unreduced form of the nonscattering conditions.

We next analyze these conditions and study their structure. In Section~\ref{SECT reduced} we show that, using the Helmholtz equation satisfied by $v$, they can be reduced to the PDE form \eqref{L_Nv = 0}. Namely, $L_N v(\bm{x}_0)=0$ for any $N=0,1,...$, where, recall, $\bm{x}_0$ is associated with a critical point of the boundary parametrization $g(t)=x_1(t)+ix_2(t)$. We refer to these conditions as the reduced form of the nonscattering conditions. We determine the general structure of the linear differential operators $L_N$, although we do not obtain explicit formulas for their coefficients, except for the two highest-order coefficients. The operator $L_N$ can nevertheless be computed fully explicitly from the unreduced form of the nonscattering conditions. Carrying out these computations, we obtain explicit formulas for $L_1$ and $L_2$, while $L_0=\operatorname{Id}$. For convenience, in stating our results we reverse this order and begin with the reduced form in Section~\ref{SECT reduced}, followed by the unreduced form in Section~\ref{SECT unreduced}. We also observe that choosing the other branch of the square root in the test vector $\bm{\zeta}(\lambda)$ leads instead to a critical point of the function $h(t)=x_1(t)-ix_2(t)$, see Remarks~\ref{REM h crit point} and \ref{REM h crit point 2}. Finally, in Section~\ref{SECT app ellipse}, we apply our results to a truly elliptical inhomogeneity. Using the first- and second-order conditions, we show that incident waves consisting of a single separated mode always scatter.

\section{Main results} \label{SECT main results}
\setcounter{equation}{0}

Let us describe our assumptions and introduce some notation:

\begin{enumerate}

\item[(H1)] $\Gamma$ is a simple closed curve parametrized by $\bm{x}(t) = (x_1(t), x_2(t))$ for $t \in [-\pi, \pi]$, where $x_j(t)$ for $j=1,2$ extend to complex analytic functions in an open set $\CT \subset \CC$, which contains the interval $[-\pi, \pi]$. We also set

\begin{equation} \label{g and h}
g(t) = x_1(t) + ix_2(t), \qquad \qquad h(t) = x_1(t) - ix_2(t).
\end{equation}

\item[(H2)] $D$, the domain enclosed by $\Gamma$, has constant refractive index $0<q_0 \neq 1$. 

\item[(H3)] $g(t)$ has a simple critical point at $t_0 \in \CT$, i.e., $g'$ has a simple zero at $t_0$, and $t_0 \neq \pm \pi$. Further, $h'(t_0) \neq 0$. We also set $\bm{x}_0 = \bm{x}(t_0)$.

\item[(H4)] There exists a contour $\PC\subset\CT$ joining $-\pi$ to $\pi$ and passing through $t_0$, such that $\Re g(t) < \Re g(t_0)$ for every $t \in \PC \backslash \{t_0\}$.

\item[(H5)] $v(\bm{x})$ solves the Helmholtz equation \eqref{v Helm} and extends to an analytic function of two complex variables in some open subset of $\CC^2$ containing $\left\{\bm{x}(t): t \in \CT \right\}$.
 
\end{enumerate}

\begin{remark} \mbox{}
\normalfont
\begin{enumerate}
\item[$\bullet$] The requirement $h'(t_0)\neq 0$ in (H3) is a nondegeneracy assumption on the parametrization $\bm{x}(t)$. Indeed, in view of the assumption $g'(t_0)=0$, it is equivalent to $\bm{x}'(t_0)\neq 0$.

\item[$\bullet$] A particularly important class of incident waves is given by the Herglotz waves -- continuous superpositions of plane waves of the form

\begin{equation*}
v(\bm{x}) = \int_{\mathbb{S}^1} \psi(\bm{\eta}) e^{ik\bm{x} \cdot \bm{\eta}} ds(\bm{\eta}),
\end{equation*}

\n where $\mathbb{S}^1$ denotes the unit circle centered at the origin and $\psi\in L^2(\mathbb{S}^1)$ is called the density function. For such incident waves assumption (H5) is automatically satisfied, since $v(\bm{x})$ extends to an entire function of $\bm{x}\in\CC^2$. One may also consider finite linear combinations of plane waves. We note that a single plane wave always scatters for the inhomogeneities considered here, as was already established in \cite{HV1}. Indeed, a single plane wave is an exponential function and therefore has no zeros. Thus, it cannot satisfy the first-order necessary condition $v(\bm{x}_0)=0$.
\end{enumerate}
\end{remark}

\n We also introduce the following concise notation: for $n=0,1,...$,

\begin{equation} \label{g_n h_n}
g_n = g^{(n)}(t_0), \qquad \qquad h_n = h^{(n)}(t_0) . 
\end{equation}

\n Finally, we introduce the differential operators 

\begin{equation} \label{A B}
A = \partial_{x_1} - i \partial_{x_2}, \qquad \qquad B = \partial_{x_1} + i \partial_{x_2}
\end{equation} 

\n and note that the Helmholtz equation \eqref{v Helm} takes the form

\begin{equation} \label{Helmholtz AB}
AB v = BA v = -k^2 v.
\end{equation}

\subsection{Reduced form} \label{SECT reduced}

\begin{theorem} \label{THM reduced}
Assume (H1)--(H5). There exists a family of linear differential operators $\{L_N\}_{N=0}^\infty$ in the variables $x_1,x_2$ such that, if the incident wave $v$ is nonscattering, then

\begin{equation*}
L_N v(\bm{x}_0)=0, \qquad \qquad \forall \, N=0,1, ... .
\end{equation*}

\n Each $L_N$ is of order $2N$ and has the form

\begin{equation} \label{L_N form}
L_N = a_{N} A^N + \sum_{n=1}^{2N}b_{N,n}B^n,
\end{equation} 

\n where $a_N$ and $b_{N,n}$ are complex constants. Moreover, the highest-order coefficients are always nonzero and are given explicitly by 

\begin{equation} \label{a_N, b_N,2N}
a_N = \frac{(2N+1)!}{N!} g_2^N, \qquad \qquad b_{N, 2N} = h_1^{2N}.
\end{equation}
\end{theorem}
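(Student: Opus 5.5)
The plan is to obtain the full asymptotic expansion of $I(\lambda)$ in \eqref{I cali} as $\lambda\to+\infty$ by steepest descent at $t_0$, and then to read off the vanishing of every coefficient as a differential condition on $v$ at $\bm{x}_0$.

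\textbf{Step 1: rewrite the integrand in terms of $g$, $h$, $A$, $B$.} Write $\sqrt{\lambda^2+\kappa^2}=\lambda+\sum_{m\ge1}c_m\kappa^{2m}\lambda^{1-2m}$. Using $x_1=(g+h)/2$ and $x_2=(g-h)/(2i)$, we get
\begin{equation*}
i\bm{x}\cdot\bm{\zeta}(\lambda)=\lambda g(t)+\sum_{m\ge1}c_m\kappa^{2m}\lambda^{1-2m}\,\frac{g(t)-h(t)}{2}.
\end{equation*}
Since $\bm{n}\,ds=(x_2',-x_1')\,dt$ and $\partial_{x_1}=(A+B)/2$, $\partial_{x_2}=(B-A)/(2i)$, the amplitude becomes a combination of $g'Av$, $h'Bv$ and $v$. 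The coefficients of these terms are polynomials in $\lambda$ and $\lambda^{-1}$; the dominant one is $\lambda g'v$. Expanding the correction factor $\exp\bigl(\sum_m c_m\kappa^{2m}\lambda^{1-2m}(g-h)/2\bigr)$ in powers of $\lambda^{-1}$ writes $I(\lambda)$ as a formal series $\sum_j\lambda^{1-j}\int_{\PC}F_j(t)e^{\lambda g(t)}\,dt$. Here each $F_j$ is an explicit polynomial in $g-h$, $g'$, $h'$, $v$, $Av$ and $Bv$ evaluated along $\bm{x}(t)$. By (H1), (H5) and Cauchy's theorem the contour may be deformed to $\PC$. The term $\lambda g'v\,e^{\lambda g}$ is integrated by parts using $\lambda g'e^{\lambda g}=\frac{d}{dt}e^{\lambda g}$. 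The boundary terms at $\pm\pi$ cancel by periodicity, and the result no longer carries the extra power of $\lambda$. Justifying that the series in $j$ can be truncated with uniform remainders is routine, since the truncation errors are $O(\lambda^{-M})\cdot e^{\lambda\Re g(t_0)}$.

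\textbf{Step 2: steepest descent.} By (H3) and (H4), Laplace's method at the simple saddle $t_0$ gives
\begin{equation*}
\int_{\PC}F e^{\lambda g}\,dt\sim e^{\lambda g_0}\sum_{n}\lambda^{-n-1/2}\,\gamma_n\,\frac{d^{2n}}{dt^{2n}}\Bigl[F(t)\,\phi(t)\Bigr]_{t=t_0},
\end{equation*}
where $\phi$ comes from the change of variables $g(t)-g_0=-s^2$ and the $\gamma_n$ are explicit constants. Equivalently, the coefficients are residues of $F/(g-g_0)^{n+1/2}$-type expressions, which is the unreduced form. Nonscattering forces the coefficient of each power $\lambda^{-N-1/2}$ to vanish.

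\textbf{Step 3: reduction to \eqref{L_N form}.} The key identity is
\begin{equation*}
\frac{d}{dt}\,w(\bm{x}(t))=\frac12\bigl(g'(t)\,Aw+h'(t)\,Bw\bigr).
\end{equation*}
Iterating it expresses each $t$-derivative at $t_0$ of $v$, $Av$ or $Bv$ as a combination of words in $A$ and $B$ applied to $v$ at $\bm{x}_0$. By \eqref{Helmholtz AB}, every mixed word reduces to $(-k^2)^pA^aB^b v$ with $\min(a,b)=0$. Hence each coefficient is a linear combination of $A^a v(\bm{x}_0)$ and $B^b v(\bm{x}_0)$. For the counting of orders, every occurrence of $A$ comes with a factor $g'$, which vanishes at $t_0$ and so must absorb one further derivative. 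An $A^a$ term therefore consumes at least $2a$ derivatives, whereas $B^b$ consumes only $b$. At level $N$ there are $2N$ derivatives available, which gives orders $a\le N$ and $b\le 2N$. The terms $g-h$ and $h'Bv$ can only lower orders. I will track this with a weighting in which $A$ has weight $2$ and $B$ weight $1$, and show that all terms have weight $\le 2N$.

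\textbf{Step 4: top coefficients.} The pure $B^{2N}$ term arises only from choosing $h'B$ at every differentiation, which gives $h_1^{2N}$ after normalization. The $A^N$ term arises only from pairs in which a $g'A$ factor is differentiated once into $g_2A$. Counting these pairings, together with the contribution of the integrated-by-parts term, should produce $\frac{(2N+1)!}{N!}g_2^N$. Contributions from lower-level conditions can be subtracted triangularly, since $L_0,\dots,L_{N-1}$ already annihilate $v$ at $\bm{x}_0$, and $L_N$ is then normalized.

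I expect Step 4, together with the bookkeeping in Step 3, to be the main obstacle. The difficulty is the exact combinatorics showing that the various sources — the Jacobian $\phi$, the $\kappa$-corrections, and the boundary amplitude — do not contribute at top weight, and obtaining the precise constant $(2N+1)!/N!$. I would first verify the cases $N=0$ and $N=1$ against the known result $L_0=\operatorname{Id}$ and the second-order condition of \cite{HV1}.
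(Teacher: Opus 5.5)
Steps 1--2 are a workable variant of the paper's route, but Steps 3--4 have a genuine gap.

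First, the order count fails. After your integration by parts the amplitude is $(-ig'Av+i\rho h'v)\,e^{\rho(g-h)/2}$; the $h'Bv$ terms cancel. The two pieces sit at different powers of $\lambda$, so a single count of ``$2N$ derivatives at level $N$'' cannot be right for both. The condition containing $B^{2N}$ is $\CR_N$, the coefficient of $e^{\lambda g(t_0)}\lambda^{-N-3/2}$. There $i\rho h'v$ enters with $2N$ derivatives, but $-ig'Av$ enters through $\Res_{t=t_0} g'Av/p^{N+3/2}$, i.e.\ with $2N+2$ derivatives. After the explicit $g'$ absorbs one, $Av$ can still generate $A^{N+1}v(\bm{x}_0)$. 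It can also generate mixed words such as $A^{N+1}B$ and $AB^{2N+1}$ of weight $2N+3$, so counting only gives $a\le N+1$. The $A^{N+1}$ term is absent for a different reason: its coefficient is a multiple of $\Res_{t=t_0} g'/p^{3/2}=2\Res_{t=t_0}(p^{-1/2})'=0$. This is the residue of an exact derivative, which no derivative count can detect. If you read your level $N$ literally as $\lambda^{-N-1/2}$, the problem moves rather than disappears: that coefficient is $\CR_{N-1}$, it contains no $B^{2N}$, and its $A^N$ coefficient vanishes by the same identity. The paper makes the cancellation visible by Taylor expanding $v$ and $Av$ in $\xi=(g-g(t_0))/2=-p/2$ and $\eta=(h-h(t_0))/2$. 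Then $\xi^j$ cancels $p^j$ exactly, the bookkeeping becomes pole-order counting ($2j+\ell\le 2n+1$, resp.\ $2n$), and the vanishing of the $A^{N+1}$ coefficient is shown by the explicit residue argument above.

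Second, your expectation for the top coefficients is backwards. Both $a_{N,N}$ and $b_{N,2N}$ come out as $\kappa^2-k^2$ times a purely geometric factor. This is consistent with $I(\lambda)=(\kappa^2-k^2)\int_D v\,e^{i\bm{x}\cdot\bm{\zeta}}\,d\bm{x}$, which follows from Green's identity. Hence the $\kappa$- and $k$-free contributions you propose to count (the pairings $g'A\to g_2A$) give zero. For $A^N$ this is again an exact-derivative residue, $\Res_{t=t_0} g'/p^{5/2}=\tfrac23\Res_{t=t_0}(p^{-3/2})'=0$. For $B^{2N}$, the term $g'Av$ produces no pure $B^{2N}$ at all without Helmholtz. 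The nonzero values come precisely from the sources you planned to show are negligible:
\begin{itemize}
\item For $B^{2N}$: the $i\rho h'v$ term (giving $\kappa^2$), and the reduction $AB^{2N+1}\to -k^2B^{2N}$.
\item For $A^N$: $i\rho h'v$ through the $A^N$ part of $v$ (giving $\kappa^2/2$); the reduction $A^{N+1}B\to -k^2A^N$ (giving $-(N+\tfrac12)k^2$); and the first-order term of $e^{\rho(g-h)/2}$ multiplying $-ig'Av$ (giving $N\kappa^2$).
\end{itemize}
The paper evaluates the last contribution by writing $ix_2=-\tfrac{p}{2}-\eta+ix_2(t_0)$, so that only the $\eta$-part has a nonzero residue. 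The $\kappa,k$-dependence totals $(N+\tfrac12)(\kappa^2-k^2)$ for $A^N$ and $\kappa^2-k^2$ for $B^{2N}$. Only after dividing by $\kappa^2-k^2\neq0$ (this is where $q_0\neq1$ enters) and normalizing do you obtain $\tfrac{(2N+1)!}{N!}g_2^N$ and $h_1^{2N}$.
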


The proof of this result is given in Section~\ref{SECT proof of reduction}. Let us note that $L_0=\operatorname{Id}$, and in Section~\ref{SECT R_1} we compute the operator $L_1$. Consequently, the first two conditions take the form

\begin{equation} \label{1st and end cond v}
\begin{cases}
v(\bm{x}_0) = 0,
\\[.1in]
6g_2 A v(\bm{x}_0) + \left( 6 h_2 - \frac{2g_3 h_1}{g_2} \right)B v(\bm{x}_0) + h_1^2 B^2 v(\bm{x}_0) = 0.
\end{cases}
\end{equation}

\begin{remark} \label{REM h crit point}
\normalfont Instead of considering critical points of $g=x_1+ix_2$, one may also consider critical points of $h=x_1-ix_2$ and obtain analogous necessary conditions for nonscattering. In other words, the roles of $g$ and $h$ can be interchanged. We refer to Remark~\ref{REM h crit point 2} below for a further explanation of why this is possible. In particular, suppose that $h$ has a simple critical point at $\tilde{t}_0\in\CT$ and that the analogue of the contour assumption (H4) holds. Let $\tilde{\bm{x}}_0=\bm{x}(\tilde{t}_0)$. If $v$ is nonscattering, then

\begin{equation*}
\tilde{L}_N v (\tilde{\bm{x}}_0) = 0, \qquad \qquad \forall \, N=0, 1, ...
\end{equation*} 

\n Here $\tilde{L}_N$ has the same form as $L_N$ in \eqref{L_N form}, with the roles of $A$ and $B$ interchanged. In particular, the first two conditions read

\begin{equation} \label{1st and 2nd cond with h}
\begin{cases}
v(\tilde{\bm{x}}_0) = 0
\\[.1in]
6 \tilde{h}_2 B v(\tilde{\bm{x}}_0) + \left( 6 \tilde{g}_2 - \frac{2\tilde{h}_3 \tilde{g}_1}{\tilde{h}_2} \right)A v(\tilde{\bm{x}}_0) + \tilde{g}_1^2 A^2 v(\tilde{\bm{x}}_0) = 0,
\end{cases}
\end{equation}

\n where $\tilde{g}_n = g^{(n)}(\tilde{t}_0)$ and similarly $\tilde{h}_n$ is defined.
\end{remark}

\begin{remark} \label{REM 3rd order}
\normalfont

The first two conditions \eqref{1st and end cond v} do not involve the refractive index $q_0$ or the wavenumber $k$. Both of these parameters appear in the third-order condition $L_2 v(\bm{x}_0) = 0$, which takes the following form (the computation was carried out using Maple, see \cite{maple}):

\begin{equation*}
L_2 = 60g_2^2 A^2 + b_{2,1} B + b_{2,2} B^2 + b_{2,3} B^3 + h_1^4 B^4,
\end{equation*}

\n where

\begin{equation*}
\begin{split}
b_{2,1} = 4k^2(4q_0-5)g_2h_1^2 + 40h_4 -\frac{80g_3}{g_2}h_3
+ 40 \left(\frac{2g_3^2}{g_2^2}-\frac{g_4}{g_2}\right)h_2
+8\left(\frac{5g_3g_4}{g_2^2}-\frac{g_5}{g_2}-\frac{40g_3^3}{9g_2^3}
\right)h_1
\end{split}
\end{equation*}

\n and

\begin{equation*}
\begin{split}
b_{2,2} = & \frac{40}{3} \left(2 h_3 -\frac{5g_3}{g_2}h_2 \right) h_1 + 60h_2^2
+\frac{20}{9} \left(\frac{8g_3^2}{g_2^2}-\frac{3g_4}{g_2}\right)h_1^2,
\\[.05in]
b_{2,3} = & 20 \left(h_2-\frac{g_3}{3g_2} h_1\right) h_1^2.
\end{split}
\end{equation*}

\end{remark}

\subsection{Unreduced form} \label{SECT unreduced}

One of the key steps in the analysis of the integral $I$ in \eqref{I cali} is to rewrite it in a form suitable for the application of the steepest descent method and the derivation of its full asymptotic expansion. To this end, we first define

\begin{equation} \label{rho}
\rho(\epsilon) = \frac{\sqrt{1+\epsilon^2 \kappa^2} - 1}{\epsilon}.
\end{equation}

\n Note that $\rho$ is analytic in a neighborhood of $\epsilon=0$ once we set $\rho(0)=0$. As was already observed in \cite{HV1}, two orders of cancellation occur in the integral $I$ that are not apparent from its original form \eqref{I cali}. In the Appendix, we make these cancellations explicit by integrating by parts twice and, after suitable simplifications, show that $I$ can equivalently be written in the following form:

\begin{equation} \label{I cali G g}
I(\lambda) =  \frac{i}{\lambda} \int_{-\pi}^\pi G(t, \lambda^{-1}) e^{\lambda g(t) }dt,
\end{equation}

\n where 

\begin{equation} \label{G}
G(t, \epsilon) = \frac{d}{dt} \left[ e^{i \rho(\epsilon) x_2(t)} Av(\bm{x}(t)) \right] + \frac{\kappa^2 - \rho(\epsilon)^2}{2} h'(t) e^{i \rho(\epsilon) x_2(t)} v(\bm{x}(t)). 
\end{equation}

\n Note that, as $\lambda\to+\infty$ we have $\epsilon=1/\lambda\to0$ and the key advantage of this formulation is that $G$ is analytic in a neighborhood of $\epsilon=0$. Since $t_0$ is a simple critical point of $g$, there exists a function $\tilde{p}$, analytic in a neighborhood of $t_0$, such that

\begin{equation} \label{p and p tilde}
p(t) := g(t_0) - g(t) = (t-t_0)^2 \tilde{p}(t), \qquad \qquad \tilde{p}(t_0) = -\frac{g''(t_0)}2 \neq 0. 
\end{equation}

\n We are now ready to state the main result, whose proof is given in the Appendix.

\begin{theorem} \label{THM 1}
Assume (H1)-(H5), then using the notation introduced above, as $\lambda \to + \infty$,

\begin{equation*}
I(\lambda) \sim \frac{i e^{\lambda g(t_0)}}{\lambda^{3/2}} \sum_{N=0}^\infty \frac{\CR_N}{\lambda^N}. 
\end{equation*}

\n The coefficients in this expansion are given by

\begin{equation} \label{R_n coeff}
\CR_N = \sum_{n=0}^N \frac{\Gamma \! \left( n + \frac{1}{2} \right)}{(N-n)!} R_n\left[ \partial_\epsilon^{N-n} G(t, 0) \right],
\end{equation}

\n where $\Gamma$ denotes the Gamma function and $R_n$ is the residue operator at $t_0$ defined by

\begin{equation} \label{R_n}
R_n[f] = \operatorname*{Res}_{t=t_0}\frac{f(t)}{p(t)^{n+\frac12}}.
\end{equation}

\n In particular, if the incident wave $v$ is nonscattering, then $\CR_N = 0$ for all $N=0,1,...$.
\end{theorem}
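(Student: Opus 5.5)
We take the representation \eqref{I cali G g} as given; it is derived in the Appendix. Set $\epsilon=\lambda^{-1}$ and expand $G$ in $\epsilon$.

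\textbf{Step 1: expansion in $\epsilon$.} Since $G(t,\epsilon)$ is analytic in $\epsilon$ near $0$, uniformly for $t$ in a neighborhood of the contour $\PC$ (by (H1), (H5) and the analyticity of $\rho$), we write
\[
G(t,\epsilon)=\sum_{m=0}^{M}\frac{\epsilon^m}{m!}\,\partial_\epsilon^m G(t,0)+\epsilon^{M+1}E_M(t,\epsilon),
\]
where $E_M$ is bounded uniformly for $t$ near $\PC$ and small $\epsilon$. By (H1) the integrand is analytic in $\CT$, so we deform $[-\pi,\pi]$ to the contour $\PC$ of (H4). The endpoints $\pm\pi$ are fixed, and we must check that the endpoint contributions cancel. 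For this we use the $2\pi$-periodicity of $\bm{x}(t)$ and hence of $G$ and $g$. Alternatively, we choose $\PC$ to differ from $[-\pi,\pi]$ only in a compact region, which is how (H4) is framed. Along $\PC$ we have $\Re g<\Re g(t_0)$ away from $t_0$. Hence the remainder term contributes $O(e^{\lambda g(t_0)}\lambda^{-M-1-1/2})$, using the crude bound $\int|e^{\lambda(g-g(t_0))}|\,|dt|=O(\lambda^{-1/2})$, which follows from the quadratic vanishing at $t_0$.

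\textbf{Step 2: Laplace method for each term.} It remains to expand
\[
J_m(\lambda)=\int_{\PC} f_m(t)e^{\lambda g(t)}\,dt, \qquad f_m=\partial_\epsilon^m G(\cdot,0).
\]
Away from $t_0$ the contributions are exponentially smaller. Near $t_0$ we write $e^{\lambda g}=e^{\lambda g(t_0)}e^{-\lambda p(t)}$ and substitute $s=p(t)$. Via \eqref{p and p tilde}, the local coordinate is $w=(t-t_0)\sqrt{\tilde p(t)}$, with $s=w^2$, and we take the branch making $w$ real along the steepest-descent direction. Expand $f_m(t)\,dt/dw=\sum_j c_j w^j$. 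The odd powers integrate to zero, and
\[
\int_{-\infty}^\infty w^{2n}e^{-\lambda w^2}\,dw=\Gamma\!\left(n+\tfrac12\right)\lambda^{-n-1/2}.
\]
Watson's lemma then gives
\[
J_m\sim e^{\lambda g(t_0)}\sum_n \Gamma\!\left(n+\tfrac12\right)c_{2n}\lambda^{-n-1/2}.
\]

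\textbf{Step 3: identify the coefficients as residues.} This is the key point. We claim $c_{2n}=\Res_{t=t_0} f_m(t)\,p(t)^{-n-1/2}$. Indeed, $p^{-n-1/2}=w^{-2n-1}$ is single-valued and meromorphic near $t_0$ in the chosen branch. Changing variables in the residue, $\Res_t f_m p^{-n-1/2}=\Res_w f_m\,(dt/dw)\,w^{-2n-1}=c_{2n}$. The branch convention for $p^{n+1/2}$ in \eqref{R_n} must match the orientation of $\PC$. This sign/branch bookkeeping is the main delicate point, and it is fixed by requiring $\Re(w)$ to increase along $\PC$. Collecting terms,
\[
I=\frac{i}{\lambda}\sum_m\frac{\lambda^{-m}}{m!}J_m .
\]
Setting $N=n+m$ gives the overall prefactor $\lambda^{-3/2}$ and the coefficient \eqref{R_n coeff}.

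\textbf{Step 4: conclusion.} If $v$ is nonscattering, then $I\equiv0$ by \eqref{I cali}. Uniqueness of asymptotic expansions in the scale $e^{\lambda g(t_0)}\lambda^{-3/2-N}$ then forces $\CR_N=0$ for all $N$.

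The hardest step is justifying the uniform remainder estimate together with the contour deformation. The point is to ensure that $\PC$, together with a neighborhood on which $E_M$ stays bounded, lies inside $\CT$. This follows from compactness of $\PC$ and the openness of $\CT$.
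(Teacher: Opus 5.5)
Your argument is correct, but it reverses the order of the two limits relative to the paper. The paper proceeds in four steps:
\begin{enumerate}
\item It freezes $\epsilon$ and applies the steepest-descent expansion, quoted from DLMF, to $F(\lambda,\epsilon)=\frac{i}{\lambda}\int_{\PC}G(t,\epsilon)e^{\lambda g}\,dt$.
\item It asserts that the remainder is uniform in $|\epsilon|\le\delta$, because $\partial_t^jG$ is uniformly bounded there.
\item It Taylor-expands the coefficients $b_{2n}(\epsilon)=\frac12\Res_{t=t_0}G(t,\epsilon)\,p^{-n-1/2}$ and substitutes $\epsilon=\lambda^{-1}$.
\item It moves $\partial_\epsilon$ inside the residue.
\end{enumerate}

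You instead Taylor-expand $G$ in $\epsilon$ first, so each $J_m$ is a parameter-free Laplace integral with a fixed analytic amplitude. The only uniformity you then need is boundedness of the Taylor remainder $E_M$ on a compact neighborhood of $\PC$, together with the crude estimate $\int_{\PC}|e^{\lambda(g-g(t_0))}|\,|dt|=O(\lambda^{-1/2})$.

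What your route buys:
\begin{itemize}
\item It sidesteps the paper's rather informal claim that the DLMF error term is controlled uniformly in $\epsilon$ by bounds on $\partial_t^jG$.
\item It avoids the final interchange of $\partial_\epsilon$ with the residue.
\item It derives the residue form of the coefficients directly through the local coordinate $w=p^{1/2}$ rather than citing it. Your orientation convention ($\Re w$ increasing along $\PC$) matches the branch choice in Remark~\ref{REM branch}.
\end{itemize}
The paper's route is shorter by citation. It also yields, as an intermediate result, the expansion of $F(\lambda,\epsilon)$ for each fixed $\epsilon$.

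Two small corrections to your Step 1.
\begin{itemize}
\item No endpoint cancellation or periodicity is needed in the contour deformation. The paths $[-\pi,\pi]$ and $\PC$ share their endpoints, so Cauchy's theorem applies directly. The endpoints contribute only exponentially smaller terms, since (H4) gives $\Re g(\pm\pi)<\Re g(t_0)$. Periodicity was already used in deriving \eqref{I cali G g}.
\item Your $O(\lambda^{-1/2})$ bound presumes that $\PC$ enters $t_0$ strictly inside the descent sectors. This can always be arranged by a local deformation, and the paper relies on the same point implicitly.
\end{itemize}
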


The formula \eqref{R_n} uses square root of the function $p$. The appropriate choice of branch is discussed in Remark~\ref{REM branch} below. Thus, we have obtained an explicit formula for the coefficients $\CR_N$ in terms of the $\epsilon$-derivatives and $t_0$-residues of the function $G$. It is not hard to show that the the residue in the above formula corresponds to a pole of order at most $2n+1$, and hence can be computed using derivatives of order up to $2n$. Consequently, $\CR_N$ can alternatively be written in closed form using only $\epsilon$- and $t$-derivatives of $G$ evaluated at $(t,\epsilon)=(t_0,0)$ (see Remark~\ref{REM residue} below).  

The reduced form of the nonscattering conditions, namely Theorem~\ref{THM reduced}, is obtained from the above theorem, and more specifically from the closed-form expressions for the coefficients $\CR_N$. These coefficients can be computed for arbitrary $N$, after which the reduction procedure described in Section~\ref{SECT proof of reduction} yields the corresponding operator $L_N$ explicitly. 

\begin{remark}[Critical points of $h$] \mbox{} \label{REM h crit point 2}
\normalfont

\n We can also consider critical points of $h$ by choosing the other branch of the square root in the definition of the test vector $\bm{\zeta}(\lambda)$ in \eqref{I cali}. Namely, $\bm{\zeta}(\lambda) = (-i\lambda, - \sqrt{\lambda^2 + \kappa^2})$. With this choice, proceeding as in the derivation of \eqref{I cali G g} from \eqref{I cali}, we obtain

\begin{equation*}
I(\lambda) =  -\frac{i}{\lambda} \int_{-\pi}^\pi H(t, \lambda^{-1}) e^{\lambda h(t) }dt,
\end{equation*}

\n where $H$ is obtained from $G$ by interchanging $A$ with $B$, and $g$ with $h$ (in particular $x_2 = (g-h)/2i$ gets negated). This explains the formulas stated in Remark~\ref{REM h crit point}.
\end{remark}

\begin{remark}[Branch of the square root] \label{REM branch} \mbox{}
\normalfont 

\n The square root of the function $p$ is defined by 

\begin{equation*}
p(t)^\frac{1}{2} = (t-t_0) \tilde{p}(t)^\frac{1}{2}.
\end{equation*}

\n Let us explain the choice of branch for the square root of $\tilde{p}$, see \cite{DLMF}. First, let $\omega$ denote the angle of the tangent to the contour $\PC$ at $t_0$ from the ``right", i.e., $\omega$ is the limiting value of $\arg(t-t_0)$ as $t\to t_0$ along the portion of $\PC$ joining $t_0$ to $\pi$. We may use any continuously varying choice of $\arg$ along the contour, so $\omega$ is defined modulo $2\pi$ (the particular choice will not affect the branch of the square root defined below). Next, choose $\omega_0 = \arg (-g''(t_0))$ so that $|\omega_0 + 2\omega| \leq \pi/2$. We then define $\tilde{p}(t_0)^{\frac{1}{2}}$ using the argument $\omega_0$. Since $\tilde{p}$ is nonzero in a sufficiently small disk centered at $t_0$, it admits an analytic square root there and we choose the branch that takes the prescribed value $\tilde{p}(t_0)^{\frac{1}{2}}$ at $t=t_0$.
\end{remark}

\begin{remark}[Calculating the residue] \label{REM residue} \mbox{}
\normalfont 

\n Suppose that $f(t)$ is analytic in a neighborhood of $t_0$. Then the residue in \eqref{R_n} can be computed using derivatives of order up to $2n$. Indeed,

\begin{equation*}
\Res_{t=t_0}\frac{f}{p^{n+\frac12}}= \Res_{t=t_0}\frac{f\tilde{p}^{-n-\frac12}}{(t-t_0)^{2n+1}} =
\frac{1}{(2n)!}\left.\frac{d^{2n}}{dt^{2n}}\left( \frac{f}{\tilde{p}^{\, n+\frac12}}\right)\right|_{t=t_0} =  \left. P_n \left( \tfrac{d}{dt} \right) f \right|_{t=t_0},
\end{equation*}

\n where in the last step we expanded the derivative using Leibniz's rule and set

\begin{equation*}
P_n(z) = \sum_{j=0}^{2n} c_{n,j} z^{2n-j}, 
\qquad \qquad 
c_{n,j} = \frac{1}{j! (2n-j)!}
\left. \frac{d^{j}}{dt^{j}} \left( \frac{1}{
\tilde{p}^{\, n+\frac12}}\right)
\right|_{t=t_0}.
\end{equation*}

\n As a result, the coefficient $\CR_N$ can be expressed in terms of a linear differential operator $\mathcal{L}_N$ in the variables $t$ and $\epsilon$:

\begin{equation*}
\CR_N = \mathcal{L}_NG(t_0, 0), \qquad \qquad
\mathcal{L}_N = \sum_{n=0}^N \frac{\Gamma \! \left( n + \frac{1}{2} \right)}{(N-n)!} P_n(\partial_t) \partial_\epsilon^{N-n}.
\end{equation*}

\end{remark}

\subsection{Application: ellipse} \label{SECT app ellipse}

\n Let $D$ be the elliptical inhomogeneity

\begin{equation} \label{E_a,b}
E_{a,b} = \left\{(x_1,x_2)\in\mathbb{R}^2:
\frac{x_1^2}{a^2}+\frac{x_2^2}{b^2}<1\right\}.
\qquad a \geq b>0.
\end{equation}

\n We use the parametrization $\bm{x}(t) = \left(a \cos t, b \sin t \right)$ and introduce the focal distance

\begin{equation} \label{c ellipse}
c = \sqrt{a^2-b^2}.
\end{equation}

\n It was shown in \cite{HV1} that this inhomogeneity satisfies all the hypotheses of Section~\ref{SECT main results}. That is, $g=x_1+ix_2$ has a simple critical point and there exists a contour $\PC$ satisfying hypothesis (H4). Since the calculation of the critical point and the associated quantities is concise and will also be needed in establishing our new scattering results, we briefly recall it here, thereby making the discussion self-contained. In addition, we show that $h=x_1-ix_2$ also has a simple critical point and satisfies the corresponding assumptions. It is not hard to show that one can rewrite

\begin{equation*}
g(t)=c\cos(t-t_0), \qquad \qquad h(t)=c\cos(t+t_0),
\end{equation*}

\n where

\begin{equation} \label{t_0 mu_0}
t_0 = i \mu_0, \qquad \qquad \mu_0 = \tanh^{-1} \frac{b}{a}.
\end{equation}

\n It follows immediately that $t_0$ is a simple critical point of $g$, while $\tilde{t}_0 = -t_0$ is a simple critical point of $h$. When $a=b$ the ellipse becomes a circle and $t_0 = i\infty$. Thus, the finite critical point $t_0$ arises only in the case of a true ellipse, $a>b$. To verify the topological assumption concerning the existence of a contour $\PC$ as in (H4), in the left portion of Figure~\ref{FIG el} we plot the curves $\Re g(t)=\Re g(t_0)$ and mark with a ``$-$" the region where $\Re g(t)<\Re g(t_0)$. The goal is to connect $-\pi$ to $\pi$ by a contour $\PC$ that remains entirely within the ``$-$" region, except at the crossing point $t_0$. As the figure shows, such a contour can clearly be constructed. The right portion of Figure~\ref{FIG el} shows the analogous contour construction for the function $h$ and its critical point $-t_0$. Regarding assumption (H5), the minimal region of analyticity required for our theorem to apply is the closure of the domain lying above the real axis and below the curve $\Re g(t)=\Re g(t_0)$. Explicitly,

\begin{equation*} 
\CT_0 = \left\{ t \ : \ |\Re t| \leq \arcsin \frac{b}{a}, \qquad 0\leq \Im t \leq \mu_0 - \tanh^{-1}\left(|\sin \Re t|\right) \right\}.
\end{equation*}

\n The analogous region associated with the function $h$ is the complex conjugate region $\overline{\CT}_0$. We are now ready to apply Theorem~\ref{THM reduced} to the elliptical inhomogeneity. We consider only the first- and second-order conditions associated with the two critical points introduced above, namely \eqref{1st and end cond v} and \eqref{1st and 2nd cond with h}. To rewrite these conditions explicitly, it remains to compute the corresponding points

\begin{equation} \label{x_0 ellipse}
\bm{x}_0 = \bm{x}(t_0) = \left(\frac{a^2}{c}, i\frac{b^2}{c} \right), \qquad \tilde{\bm{x}}_0 = \bm{x}(-t_0) = \left(\frac{a^2}{c}, -i\frac{b^2}{c} \right)
\end{equation}

\n as well as the coefficients

\begin{equation*}
g_1 = g_3 = 0, \qquad g_2 = - c \qquad \text{and} \qquad h_1 = -2i \frac{ab}{c}, \qquad h_2 = - \frac{a^2+b^2}{c}.
\end{equation*}

\n The same formulas hold for $\tilde{h}_j$ in place of $g_j$ and $\tilde{g}_j$ in place of $h_j$. We recall that $g_j=g^{(j)}(t_0)$, while tildes denote evaluation at $\tilde{t}_0=-t_0$, and analogously for $h$.

\begin{figure}[H]
    \centering
    \includegraphics[width=0.49\linewidth]{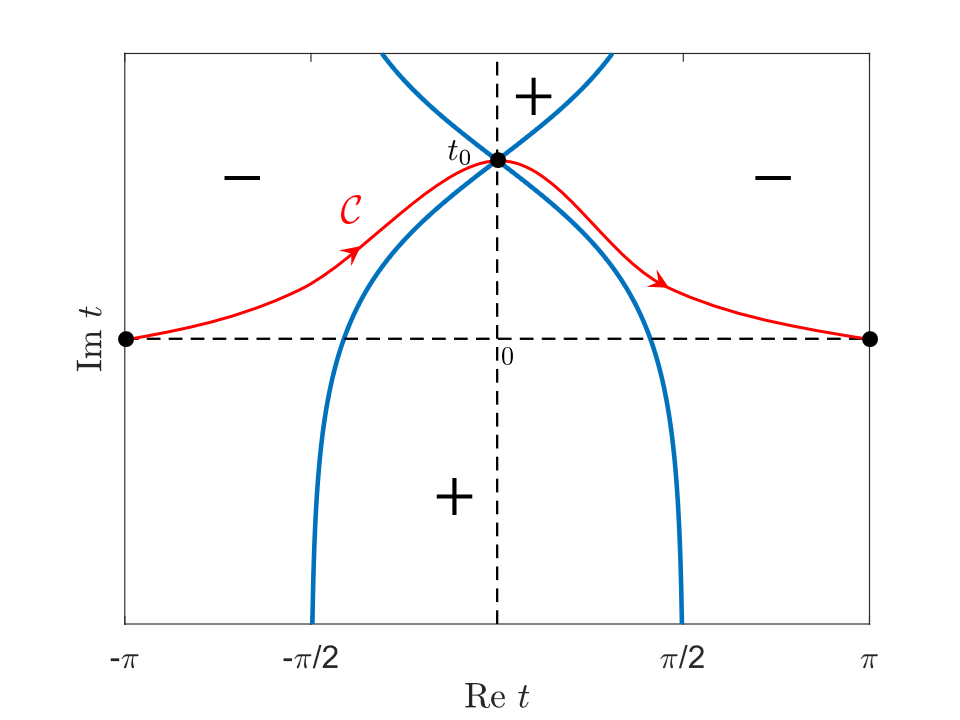}
    \includegraphics[width=0.495\linewidth]{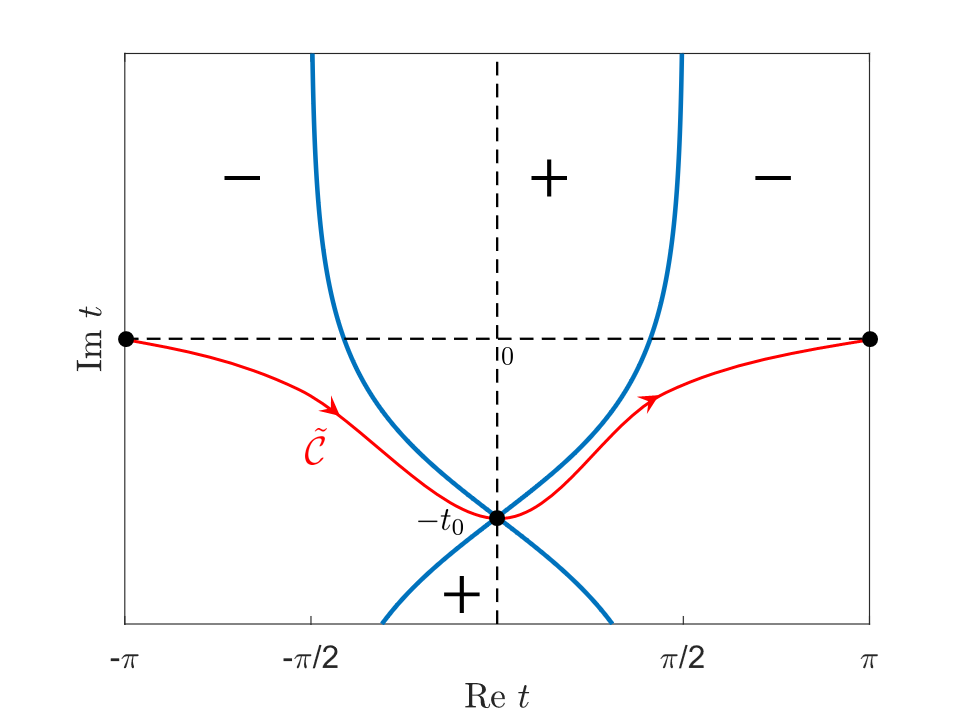}
    \caption{\textbf{Left:} The curve $\Re g(t)=\Re g(t_0)$, with ``$-$" (resp. ``$+$") indicating the region where $\Re g(t)<\Re g(t_0)$ (resp. $\Re g(t)>\Re g(t_0)$). The contour $\PC$ lies entirely in the ``$-$" region, except at the point $t_0$. \textbf{Right:} The analogous construction for $h$ and the critical point $-t_0$.}
    \label{FIG el}
\end{figure}

\n Thus, we obtain:

\begin{corollary} \label{CORO ellipse}
Let $E_{a,b}$ with $a>b$ be the elliptical inhomogeneity \eqref{E_a,b} with constant refractive index $0<q_0 \neq 1$. Suppose that the incident wave $v$ is nonscattering.

\begin{enumerate}
\item[(i)] If $v$ extends to an analytic function of two complex variables in an open set of $\CC^2$ containing $\left\{\bm{x}(t): t \in \CT_0 \right\}$, then

\begin{equation} \label{1st and 2nd ellipse}
v(\bm{x}_0) = 0 \quad \text{and} \quad
3c^3 A v(\bm{x}_0) + 3c(a^2+b^2) B v(\bm{x}_0) + 2a^2b^2 B^2 v(\bm{x}_0) = 0.
\end{equation}

\item[(ii)] If $v$ extends to an analytic function of two complex variables in an open set of $\CC^2$ containing $\left\{\bm{x}(t): t \in \overline{\CT}_0 \right\}$, then

\begin{equation} \label{1st and 2nd ellipse h}
v(\tilde{\bm{x}}_0) = 0
\quad \text{and} \quad
3c^3 B v(\tilde{\bm{x}}_0) + 3c(a^2+b^2) A v(\tilde{\bm{x}}_0) + 2a^2b^2 A^2 v(\tilde{\bm{x}}_0) = 0
\end{equation}

\end{enumerate}
\end{corollary}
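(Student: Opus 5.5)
The corollary follows by specializing Theorem~\ref{THM reduced} and Remark~\ref{REM h crit point} to the ellipse. The plan has three steps: check the hypotheses, compute the data $g_n,h_n$ at the critical point, and substitute them into \eqref{1st and end cond v} and \eqref{1st and 2nd cond with h}.

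\textbf{Hypotheses.} (H1) and (H2) are immediate for the parametrization $\bm{x}(t)=(a\cos t,b\sin t)$, which is entire. For (H3), I use the identities $g(t)=c\cos(t-t_0)$ and $h(t)=c\cos(t+t_0)$ with $t_0=i\mu_0$. To verify them, expand $c\cos(t-i\mu_0)=c\cosh\mu_0\cos t+ic\sinh\mu_0\sin t$ and note that $\tanh\mu_0=b/a$ gives $c\cosh\mu_0=a$ and $c\sinh\mu_0=b$. Then $g'(t_0)=0$ and $g''(t_0)=-c\neq0$, so $t_0$ is a simple critical point, and $t_0\neq\pm\pi$. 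Also $h'(t_0)=-c\sin(2t_0)=-ic\sinh(2\mu_0)=-2iab/c\neq0$. (H4) is the contour construction shown in Figure~\ref{FIG el}, taken from \cite{HV1}. If one wants it explicitly, run $\PC$ from $-\pi$ up along a path inside the region $\Re g<\Re g(t_0)=c$, cross through $t_0$ along the steepest descent directions of $c\cos(t-t_0)$, which are horizontal at $t_0$, and return to $\pi$. (H5) is exactly the analyticity assumption in (i), with the region of analyticity $\CT$ taken as a neighborhood of $\CT_0$, which contains such a contour. Case (ii) is identical with $h$, $-t_0$ and $\overline{\CT}_0$ in place of $g$, $t_0$ and $\CT_0$.

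\textbf{Coefficients.} From the cosine forms, $g_1=g_3=0$ and $g_2=-c$. Further $h_1=-c\sin 2t_0=-2iab/c$ and $h_2=-c\cos 2t_0=-c\cosh 2\mu_0=-(a^2+b^2)/c$. The point is $\bm{x}_0=(a\cosh\mu_0,\,ib\sinh\mu_0)=(a^2/c,\,ib^2/c)$. The tilde quantities follow by symmetry: $h(t)=g(-t)$ up to the roles of the two critical points, which gives $\tilde h_j=g_j$-type values.

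\textbf{Substitution.} Inserting these values into the second condition of \eqref{1st and end cond v} gives
\[
-6cAv+\Big(-6\tfrac{a^2+b^2}{c}\Big)Bv-\tfrac{4a^2b^2}{c^2}B^2v=0 .
\]
The term $2g_3h_1/g_2$ drops out because $g_3=0$. Multiplying by $-c^2/2$ yields \eqref{1st and 2nd ellipse}. Case (ii) follows in the same way from \eqref{1st and 2nd cond with h}, giving \eqref{1st and 2nd ellipse h}.

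The only step with real content is (H4). Since the paper relies on Figure~\ref{FIG el}, a rigorous argument would show that the level set $\Re\cos(t-t_0)=1$ near $t_0$ consists of two curves crossing at right angles. The "$-$" sectors then contain the horizontal directions. One must also check that these sectors connect to $\pm\pi$ within $\CT_0$; this holds because $\Re g(\pm\pi)=-a<c$.
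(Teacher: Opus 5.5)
Your proposal is correct and follows the paper's route. Like the paper, you verify (H1)--(H5) via $g(t)=c\cos(t-t_0)$, $h(t)=c\cos(t+t_0)$, deferring (H4) to \cite{HV1} and Figure~\ref{FIG el}; you compute $g_1=g_3=0$, $g_2=-c$, $h_1=-2iab/c$, $h_2=-(a^2+b^2)/c$, $\bm{x}_0=(a^2/c,\,ib^2/c)$; and you substitute into \eqref{1st and end cond v} and \eqref{1st and 2nd cond with h}, rescaling by $-c^2/2$.

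One slip in your closing aside: $\CT_0$ is the region where $\Re g\ge c$, so $\PC$ cannot run inside it. It must skirt the curved boundary of $\CT_0$ through the ``$-$'' region, and $\CT$ must also contain a neighborhood of $[-\pi,\pi]$, where analyticity of $v$ is automatic. Moreover, $\Re g(\pm\pi)=-a<c$ by itself does not establish the connectivity you need.
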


\n In particular, if $v$ admits an analytic extension to an open set containing $\left\{\bm{x}(t): t \in \CT_0 \cup \overline{\CT}_0 \right\}$ as is the case, e.g. for Herglotz waves, then both parts of the above corollary apply. Considering only the first-order conditions, we immediately conclude that a nonscattering incident wave $v$ must satisfy $v(\bm{x}_0) = v(\tilde{\bm{x}}_0) = 0$. We may view this as a symmetry-type condition: any $v$ that fails this condition, i.e. that does not vanish simultaneously at the two conjugate points $\bm{x}_0$ and $\tilde{\bm{x}}_0$, necessarily scatters from the ellipse.

\begin{remark}
\normalfont The third-order condition $L_2 v(\bm{x}_0) = 0$ from Remark~\ref{REM 3rd order} now takes the following form (see the Maple script in \cite{maple}):

\begin{equation*}
\begin{split}
L_2 =&  60c^2 A^2 +16k^2\frac{a^2b^2}{c} (4q_0-5) B
+\frac{20}{c^2} \left(3a^4 + 10a^2b^2 + 3b^4\right)B^2+
\\[.05in]
&+80 \frac{a^2b^2}{c^3}(a^2+b^2) B^3
+\frac{16a^4b^4}{c^4}B^4.
\end{split}
\end{equation*}

\end{remark}

\vspace{.05in}

Our next key application concerns incident waves consisting of a single separated mode, which we will refer to as single separated incident modes. These modes are symmetric and the conditions in part $(ii)$ provide no additional information beyond those in part $(i)$. In particular, if such an incident wave vanishes at $\bm{x}_0$, then it also vanishes at $\tilde{\bm{x}}_0$. We will show that every single separated incident mode necessarily scatters, using only the two conditions in part $(i)$. This is the subject of the next subsection.   

\subsubsection{Separated incident modes} \label{SECT separated modes}

\n Let us begin with the case of the disk $D_a=E_{a,a}$ of radius $a$ and recall the classical nonscattering result in this setting. Since the incident wave $v$ solves the Helmholtz equation in the whole space, it also solves it inside the disk. Separating variables in polar coordinates gives 

\begin{equation*}
v(\bm{x}) = \sum_{n\in \ZZ} v_n J_n(kr) e^{in\theta},
\end{equation*}

\n where $v_n$ are complex constants, $(r,\theta)$ are the polar coordinates of $\bm{x}$ and $J_n$ denotes the Bessel function of the first kind of order $n$. It is well known that, for each fixed $n$, the single separated mode $J_n(kr)e^{in\theta}$ is nonscattering at infinitely many wavenumbers \cite{CCH23, VX}. These nonscattering wavenumbers are obtained by separating variables in the transmission problem and are precisely the positive roots of the determinant condition

\begin{equation} \label{det}
k J_n'(k a) J_n(\kappa a) - \kappa  J_n'(\kappa a) J_n(ka) = 0,
\end{equation}

\n where, as before, $\kappa = k \sqrt{q_0}$. More precisely we have the following.

\begin{theorem} \label{THM disk}
Let $D_a=E_{a,a}$ be the disk of radius $a>0$ centered at the origin, with constant refractive index $0<q_0 \neq 1$. For each $n\in\mathbb{Z}$, consider the single separated incident mode $v(k, \bm{x}) = J_n(kr) e^{in\theta}$. Then there exists an infinite sequence of distinct positive wavenumbers $k_1 < k_2<...$, which solve \eqref{det}, such that $v(k_j, \bm{x})$ is nonscattering for every $j=1,2,...$.
\end{theorem}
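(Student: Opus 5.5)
The plan is to show directly that the transmission problem \eqref{u system} has a solution for $v=J_n(kr)e^{in\theta}$ when $k$ solves \eqref{det}, and then to show that \eqref{det} has infinitely many positive roots. \emph{Step 1 (separation of variables).} I will look for $u$ in the form $u = \big(\alpha J_n(\kappa r) + \beta J_n(kr)\big)e^{in\theta}$ in $D_a$. Since $\Delta(J_n(kr)e^{in\theta}) = -k^2 J_n(kr)e^{in\theta}$, we get
$$(\Delta+\kappa^2)\big(\beta J_n(kr)e^{in\theta}\big)=\beta(\kappa^2-k^2)J_n(kr)e^{in\theta}=\beta k^2(q_0-1)v .$$
Taking $\beta=-1$ therefore gives the PDE in \eqref{u system}, and the $J_n(\kappa r)$ term is homogeneous. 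The boundary conditions $u=\partial_r u=0$ at $r=a$ become the scalar equations
$$\alpha J_n(\kappa a)=J_n(ka), \qquad \alpha\kappa J_n'(\kappa a)=kJ_n'(ka).$$
\emph{Step 2 (solvability).} This $2\times1$ system for $\alpha$ is solvable precisely when the $2\times2$ determinant \eqref{det} vanishes, provided $J_n(\kappa a)$ and $J_n'(\kappa a)$ do not both vanish. They cannot, since the zeros of $J_n$ on $(0,\infty)$ are simple. So at any root $k>0$ of \eqref{det} there is a smooth solution $u$ of \eqref{u system}, extended by zero outside $D_a$. The field $u^{\rm tot}=u+v$ then satisfies the transmission conditions, and the scattered field vanishes outside $D_a$, so $v$ is nonscattering.

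\emph{Step 3 (infinitely many roots).} I expect this to be the main obstacle. I will use the large-argument asymptotics $J_n(z)=\sqrt{2/(\pi z)}\big(\cos(z-\phi_n)+O(z^{-1})\big)$ with $\phi_n=n\pi/2+\pi/4$, together with the corresponding formula for $J_n'$ with $\cos$ replaced by $-\sin$. Substituting into \eqref{det} and multiplying by $\frac{\pi a}{2}\sqrt{k\kappa}$, the left side becomes
$$\sqrt{k\kappa}\,\Big(-\sin(ka-\phi_n)\cos(\kappa a-\phi_n)+\cos(ka-\phi_n)\sin(\kappa a-\phi_n)\Big)+O(1),$$
that is, $\sqrt{k\kappa}\,\big(\sin((\kappa-k)a)+O(k^{-1})\big)$. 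Since $\kappa-k=k(\sqrt{q_0}-1)\neq0$, the function $F(k)=\sin(k(\sqrt{q_0}-1)a)+O(k^{-1})$ takes values close to $\pm1$ at the points $k_m=(m+\tfrac12)\pi/(|\sqrt{q_0}-1|a)$. It therefore alternates sign for large $m$. By continuity, the intermediate value theorem gives a root of \eqref{det} between consecutive $k_m$.

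These roots are distinct and form an infinite increasing sequence, and Step 2 then gives the theorem. Care is needed only in making the $O(k^{-1})$ remainder uniform, which the standard Hankel asymptotic expansions provide for fixed $n$.
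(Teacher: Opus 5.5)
Your Steps 1 and 2 are correct. They are exactly the separation-of-variables reduction to \eqref{det} that the paper alludes to; the paper does not prove this theorem itself but quotes it from the literature.

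Step 3, however, contains a computational error. After multiplying \eqref{det} by $\frac{\pi a}{2}\sqrt{k\kappa}$, the two products do not share a common factor $\sqrt{k\kappa}$. The first carries the weight $k$ and the second the weight $\kappa$, so with $X=ka-\phi_n$ and $Y=\kappa a-\phi_n$ you actually get
\[
-k\sin X\cos Y+\kappa\sin Y\cos X+O(1).
\]
Divide by $k$ and set $s=\sqrt{q_0}$. The leading term is then
\[
F_0(k)=\frac{s+1}{2}\sin\bigl((\kappa-k)a\bigr)+\frac{s-1}{2}\sin\bigl((\kappa+k)a-n\pi-\tfrac{\pi}{2}\bigr).
\]
This has a second oscillatory term of the same order as the first. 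Since $q_0\neq1$, that term never vanishes and cannot be absorbed into the $O(k^{-1})$ remainder. So your claim that the function is close to $\pm1$ at the points $k_m$ is false.

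The strategy survives with a small repair. Because $s>0$, the second term has amplitude $\frac{|s-1|}{2}<\frac{s+1}{2}$. At $k_m=(m+\frac12)\pi/(|s-1|a)$ one has $\sin\bigl((s-1)k_m a\bigr)=\pm(-1)^m$, where the sign $\pm$ is that of $s-1$ and does not depend on $m$. Hence
\[
|F_0(k_m)|\ge\frac{s+1}{2}-\frac{|s-1|}{2}=\min(1,s)>0,
\]
and the sign of $F_0(k_m)$ alternates in $m$. The Hankel expansions give a uniform $O(k^{-1})$ remainder for fixed $n$. So the intermediate value theorem gives a root of \eqref{det} in each interval $(k_m,k_{m+1})$ for all large $m$, as you intended. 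With this correction your proof is complete.
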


Continuing our study of the incident waves $v(k,\bm{x})=J_n(kr)e^{in\theta}$, one may ask whether they can be nonscattering for an elliptical inclusion $E_{a,b}$ with $a>b$. This is particularly interesting from a perturbative point of view when $a$ is close to $b$, so that $E_{a,b}$ is close to the disk $D_a$. The result of \cite{VX}, which applies to arbitrary incident Herglotz waves, implies in particular that for the single-mode incident waves considered above, the set of nonscattering wavenumbers for $E_{a,b}$ is at most finite, in contrast to the infinite set arising for $D_a$. We show below that this set is in fact empty.

\begin{theorem} \label{THM ellipse J_n}
Let $E_{a,b}$ with $a>b$ be the elliptical inhomogeneity \eqref{E_a,b} with constant refractive index $0<q_0 \neq 1$. Then for every $n \in \ZZ$ and every $k>0$ the incident wave $v(k, \bm{x}) = J_n(kr) e^{in\theta}$ scatters. 
\end{theorem}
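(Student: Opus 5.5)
The plan is to rewrite the single mode in the complex coordinates $z=x_1+ix_2$, $w=x_1-ix_2$, in which both conditions of Corollary~\ref{CORO ellipse}(i) become statements about a one-variable entire function at one point. Since $\partial_z=\tfrac12(\partial_{x_1}-i\partial_{x_2})$ and $\partial_w=\tfrac12(\partial_{x_1}+i\partial_{x_2})$, we have $A=2\partial_z$ and $B=2\partial_w$. Moreover $r^2=zw$ and $e^{i\theta}=z/r$. For $n\ge 0$ this gives
\begin{equation*}
v=J_n(kr)e^{in\theta}=z^nF(zw), \qquad F(s)=\frac{J_n(k\sqrt{s})}{s^{n/2}},
\end{equation*}
where $F$ is entire. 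For $n<0$ we use $J_{-m}=(-1)^mJ_m$ with $m=|n|$, so that $v=\pm w^{m}F_m(zw)$. From \eqref{x_0 ellipse}, the point $\bm{x}_0$ has coordinates $z_0=(a^2-b^2)/c=c\neq0$ and $w_0=(a^2+b^2)/c$, hence $s_0:=z_0w_0=a^2+b^2$. The mode is entire in $\CC^2$, so Corollary~\ref{CORO ellipse}(i) applies. Its first condition reads $F(s_0)=0$, i.e.\ $J_n(k\sqrt{a^2+b^2})=0$.

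Next I translate the Helmholtz equation $4\partial_z\partial_w v=-k^2v$ into an ODE for $F$. For $n\ge0$ it is $4\big((n+1)F'+sF''\big)=-k^2F$. Since $s_0\neq0$ is a regular point, $F(s_0)=0$ forces $F'(s_0)\neq0$ (otherwise $F\equiv0$) and gives $F''(s_0)=-(n+1)F'(s_0)/s_0$. Evaluating at $\bm{x}_0$ and using $F(s_0)=0$:
\begin{equation*}
Av=2z_0^nw_0F',\qquad Bv=2z_0^{n+1}F',\qquad B^2v=4z_0^{n+2}F''.
\end{equation*}
Substituting into the second condition of \eqref{1st and 2nd ellipse} gives $12s_0F'(s_0)+8a^2b^2F''(s_0)=0$. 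Eliminating $F''$ reduces this to
\begin{equation*}
F'(s_0)\Big(12(a^2+b^2)^2-8(n+1)a^2b^2\Big)=0.
\end{equation*}
Hence the mode scatters unless $3(a^2+b^2)^2=2(n+1)a^2b^2$. For $n<0$ the same computation applies to $w^mF_m(zw)$. Now $A$ and $B$ play swapped roles and extra terms $2mw_0^{m-1}z_0F'$ appear in $B^2v$. This gives a different, explicit linear relation $\alpha_m F_m'(s_0)=0$, and I would check directly that $\alpha_m$ does not vanish for $a>b$.

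The main obstacle is the exceptional aspect ratio. Since $(a^2+b^2)^2>4a^2b^2$ for $a>b$, the relation $3(a^2+b^2)^2=2(n+1)a^2b^2$ can only hold when $n+1>6$. For such $n$ there seem to be ellipses for which the two conditions are compatible, with $k$ chosen so that $k\sqrt{a^2+b^2}$ is a zero of $J_n$. I would first recheck the computation, in particular the values of $h_1$ and $h_2$ and the branch and sign conventions. If the degeneracy survives, I would bring in the third-order condition $L_2v(\bm{x}_0)=0$ from the remark following Corollary~\ref{CORO ellipse}. In that case I would reduce $F'''(s_0)$ and $F''''(s_0)$ to multiples of $F'(s_0)$ via the ODE, which introduces $k^2$. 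I would then show that the resulting coefficient, which involves $k^2(4q_0-5)$, cannot vanish simultaneously with the exceptional relation for $0<q_0\neq1$. If that fails, I would go to $L_3$, using the structure \eqref{L_N form} with the nonzero leading coefficients \eqref{a_N, b_N,2N}.
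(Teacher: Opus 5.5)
\textbf{There is a genuine gap for $n\ge 6$.} Your computation at $\bm{x}_0$ is correct, including the negative-index case, where $\alpha_m=12(a^2+b^2)^2+8(m-1)a^2b^2>0$. But it only proves the theorem for $n\le 5$, which is exactly the threshold of \cite{HV1}. For every $n\ge 6$ there are ellipses with $3(a^2+b^2)^2=2(n+1)a^2b^2$. For them, both conditions in \eqref{1st and 2nd ellipse} hold as soon as $k\sqrt{a^2+b^2}$ is a zero of $J_n$. The degeneracy is real, not a sign error.

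Your remedy via $L_2$ does not remove it. Insert $B^jv(\bm{x}_0)=2^jc^{n+j}F^{(j)}(s_0)$ and $A^2v(\bm{x}_0)=4(n-1)c^{n-2}s_0F'(s_0)$ into the paper's ellipse form of $L_2$, and reduce $F'',F''',F''''$ with your ODE. At the exceptional ratio one finds
\[
L_2v(\bm{x}_0)=\frac{192\,s_0\,c^{n}F'(s_0)}{n+1}\Big[2n^2-5n-13+k^2s_0\Big(q_0-1+\frac{3}{2(n+1)}\Big)\Big].
\]
This vanishes for $q_0=1-\frac{3}{2(n+1)}-\frac{2n^2-5n-13}{k^2s_0}$. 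For $n=6$, take $k\sqrt{s_0}=j_{6,1}\approx 9.94$, the first positive zero of $J_6$; then $q_0\approx 0.49$, which is admissible. So the first three conditions at $\bm{x}_0$ can hold simultaneously. Going on to $L_3,L_4,\dots$, of which only the leading coefficients are known, is open-ended. As written, the case $n\ge 6$ is not proved.

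\textbf{The missing idea is to use the second critical point.} The mode is entire, so Corollary~\ref{CORO ellipse}(ii) also applies at $\tilde{\bm{x}}_0$. There $\tilde z_0=s_0/c$ and $\tilde w_0=c$, and $A$ and $B$ swap roles in \eqref{1st and 2nd ellipse h}. For $v=z^nF(zw)$ with $n\ge 0$, this is literally your negative-index computation with $z\leftrightarrow w$. It gives
\[
\big[12(a^2+b^2)^2+8(n-1)a^2b^2\big]F'(s_0)=0,
\]
and the bracket is positive for every $n\ge 0$. Hence $F(s_0)=F'(s_0)=0$ at the regular point $s_0\neq 0$, forcing $F\equiv 0$, which is a contradiction.

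Equivalently, and this is the paper's route: the reflection $x_2\mapsto -x_2$ preserves $E_{a,b}$ and maps $J_n(kr)e^{in\theta}$ to $(-1)^nJ_{-n}(kr)e^{-in\theta}$. So nonscattering for some $n>5$ would give nonscattering for the index $-n<0$, which your computation at $\bm{x}_0$ already excludes. With this one step added, your argument is complete and self-contained, and it recovers the $n\le 5$ result of \cite{HV1} along the way.
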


\begin{proof}
In \cite{HV1}, we have already established that these incident waves scatter in the case $n\leq 5$. It therefore remains to establish the result for indices $n>5$, which can be done using the symmetry of the ellipse. Indeed, suppose that $v(k, \bm{x}) = J_n(kr) e^{in\theta}$ is nonscattering for $E_{a,b}$ for some $k>0$ and some $n > 5$. Since $E_{a,b}$ is symmetric with respect to the horizontal axis, it follows that the wave $J_n(kr) e^{-in\theta}$ is also nonscattering. In view of the identity $J_{-n} = (-1)^n J_n$ this implies that the wave $J_{-n}(kr) e^{-in\theta}$ is nonscattering as well. This is a contradiction, since $-n<-5\leq 5$ and the result of \cite{HV1} applies to the index $-n$. The same conclusion can also be obtained using the second-order conditions in \eqref{1st and 2nd ellipse} and \eqref{1st and 2nd ellipse h}.
\end{proof}

The incident waves $v(k,\bm{x})=J_n(kr)e^{in\theta}$ are naturally adapted to the disk geometry, since they arise from separation of variables for the Helmholtz equation in the disk. Therefore, one would not expect these incident waves to be nonscattering for a truly elliptical inhomogeneity, and Theorem~\ref{THM ellipse J_n} confirms this expectation. We therefore turn to the elliptical analogues of the above incident Bessel modes and show that the conclusion of Theorem~\ref{THM disk} fails entirely for a truly elliptical inhomogeneity. First, using elliptic coordinates and separating the variables in the Helmholtz equation inside $E_{a,b}$ we obtain (see Section~\ref{SECT ellipse sep var} for details)

\begin{equation} \label{v Matheiu series}
v(\bm{x}) = \sum_{n=0}^{\infty}
v_n \Ce_n(\mu,q) \ce_n(\nu,q)
+
\sum_{n=1}^{\infty} \tilde{v}_n \Se_n(\mu,q) \se_n(\nu,q),
\end{equation}

\n where $v_n$ and $\tilde{v}_n$ are complex constants and $(\mu,\nu)$ are the elliptic coordinates of $\bm{x}$. Here, $\ce_n$ and $\se_n$ denote the standard (or angular) Mathieu functions, while $\Ce_n$ and $\Se_n$ denote the modified (or radial) Mathieu functions of order $n$. Finally, the parameter

\begin{equation*}
q = \frac{k^2 (a^2-b^2)}{4}
\end{equation*}

\n should not be confused with the refractive index $q_0$. We use the notation $q$ here to remain consistent with the standard convention for Mathieu functions. We now state the main result of this subsection, whose proof is given in Section~\ref{SECT proof ellipse}.

\begin{theorem} \label{THM Mathieu}
Let $E_{a,b}$ with $a>b$ be the elliptical inhomogeneity \eqref{E_a,b} with constant refractive index $0<q_0 \neq 1$. Then, for every $k>0$, each single separated incident mode
\begin{equation*}
v(k,\bm{x})=\Ce_n(\mu,q)\ce_n(\nu,q),
\qquad n=0,1,\ldots,
\end{equation*}
as well as each single separated incident mode
\begin{equation*}
v(k,\bm{x})=\Se_n(\mu,q)\se_n(\nu,q),
\qquad n=1,2,\ldots,
\end{equation*}
scatters from $E_{a,b}$.
\end{theorem}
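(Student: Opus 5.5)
The plan is to apply Corollary~\ref{CORO ellipse}(i) and show that, for a single separated mode, the two conditions in \eqref{1st and 2nd ellipse} force the underlying angular Mathieu function and its derivative to vanish at the same point. By uniqueness for the Mathieu ODE this is impossible.

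\textbf{Analyticity.} I would first check that each mode is entire in $\bm{x}\in\CC^2$. Each mode is a Herglotz wave, with density proportional to $\ce_n$ or $\se_n$, so (H5) holds on $\CT_0$.

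\textbf{Elliptic coordinates in complex form.} Write $x_1=c\cosh\mu\cos\nu$ and $x_2=c\sinh\mu\sin\nu$. Set $\xi=\mu+i\nu$ and $\eta=\mu-i\nu$, so that
\begin{equation*}
z:=x_1+ix_2=c\cosh\xi, \qquad w:=x_1-ix_2=c\cosh\eta,
\end{equation*}
and treat $\xi,\eta$ as independent complex variables. Then $A=2\partial_z=\frac{2}{c\sinh\xi}\partial_\xi$ and $B=2\partial_w=\frac{2}{c\sinh\eta}\partial_\eta$. Using $\Ce_n(\mu,q)=\ce_n(i\mu,q)$, and writing $f=\ce_n(\cdot,q)$, the mode becomes
\begin{equation*}
v=f\!\left(\tfrac{i}{2}(\eta+\xi)\right)f\!\left(\tfrac{i}{2}(\eta-\xi)\right),
\end{equation*}
since $f$ is even. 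The $\Se_n\se_n$ case is identical up to a constant factor, with $f=\se_n$ odd, because $\Se_n(\mu,q)=-i\,\se_n(i\mu,q)$.

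\textbf{Locating $\bm{x}_0$.} From \eqref{x_0 ellipse}, $z(\bm{x}_0)=(a^2-b^2)/c=c$ and $w(\bm{x}_0)=(a^2+b^2)/c$. Since $\tanh\mu_0=b/a$ gives $\cosh 2\mu_0=(a^2+b^2)/c^2$, we get $\xi=0$ and $\eta=2\mu_0$ at $\bm{x}_0$. The point $\xi=0$ is a branch point of $\xi\mapsto z$, but $v$ is even in $\xi$ and hence analytic in $z$ there. In particular
\begin{equation*}
Av(\bm{x}_0)=\frac{2}{c}\,\partial_\xi^2 v\big|_{\xi=0}.
\end{equation*}

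\textbf{The two conditions.} Let $s=i\mu_0$.
\begin{itemize}
\item The first condition reads $v(\bm{x}_0)=f(s)^2=0$, so $f(s)=0$.
\item Using $f(s)=0$, a short computation gives $\partial_\xi^2 v|_{\xi=0}=\tfrac12 f'(s)^2$. Hence $Av(\bm{x}_0)=f'(s)^2/c$.
\item Next, $\partial_\eta v=i f f'$ vanishes at $\bm{x}_0$, so $Bv(\bm{x}_0)=0$.
\item For $B^2v$, only the term $\left(\tfrac{2}{c\sinh 2\mu_0}\right)^2\partial_\eta^2 v$ survives, because $\partial_\eta v$ vanishes. Here $\partial_\eta^2 v=-\tfrac12 f'(s)^2$, and $\sinh 2\mu_0=2ab/c^2$, so $B^2 v(\bm{x}_0)=-\frac{c^2}{2a^2b^2}f'(s)^2$.
\end{itemize}
Substituting into the second condition of \eqref{1st and 2nd ellipse} gives
\begin{equation*}
3c^2 f'(s)^2-c^2 f'(s)^2=2c^2f'(s)^2=0,
\end{equation*}
hence $f'(s)=0$. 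Then $f(s)=f'(s)=0$ for a solution of the Mathieu equation $f''+(\alpha-2q\cos 2\nu)f=0$. By uniqueness for this linear ODE with entire coefficients, $f\equiv 0$, which is a contradiction.

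\textbf{Main obstacle.} The hardest step is the bookkeeping near the degenerate point $\xi=0$. This means justifying the branch and evenness argument that expresses $Av$ through $\partial_\xi^2 v$, and tracking the factors $i/2$ and the sign conventions so that the two contributions do not cancel. Only the nonvanishing of the combined coefficient, here $2c^2$, matters, so I would double-check it by a Taylor expansion directly in $z-c$. A consistency check is that the result is independent of $n$, $k$ and $q_0$, as it must be, since the second-order condition does not involve $k$ or $q_0$.
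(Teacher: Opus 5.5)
Your proposal is correct and follows the paper's proof essentially step for step. You obtain $Av(\bm{x}_0)=f'(s)^2/c$, $Bv(\bm{x}_0)=0$ and $B^2v(\bm{x}_0)=-\frac{c^2}{2a^2b^2}f'(s)^2$ as in Lemma~\ref{LEM A B B^2}, reduce the second condition to $2c^2f'(s)^2=0$, and conclude by ODE uniqueness. The only differences are cosmetic: you use the independent variables $\mu\pm i\nu$ and justify the removable singularity at $\bm{x}_0$ by evenness in $\xi$, where the paper fixes $\mu=\mu_0$, lets $\nu\to i\mu_0$ and omits the removability details; you also check (H5) explicitly via the Herglotz representation.
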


\section{Proof of Theorem~\ref{THM Mathieu}} \label{SECT proof ellipse}
\setcounter{equation}{0}

In this section, we prove Theorem~\ref{THM Mathieu}. In the first subsection, we introduce elliptic coordinates and separate variables in the Helmholtz equation inside the ellipse, obtaining the ODEs that lead to the Mathieu functions. The second subsection contains the proof of the theorem, where we show that a single separated incident mode cannot satisfy the first- and second-order necessary conditions \eqref{1st and 2nd ellipse} for nonscattering and therefore must scatter.

\subsection{Separation of variables and Mathieu functions} \label{SECT ellipse sep var}

Recall that $c$ denotes the focal distance defined in \eqref{c ellipse}. The elliptic coordinates $(\mu,\nu)$ are defined by

\begin{equation*}
\begin{cases}
x_1=c\cosh\mu\cos\nu
\\
x_2=c\sinh\mu\sin\nu,
\end{cases}
\qquad \qquad \mu \geq 0, \ \nu \in [0,2\pi].
\end{equation*}

\n The curves of constant $\mu$ are confocal ellipses with foci at $(\pm c,0)$, while the curves of constant $\nu$ are hyperbolas. In elliptic coordinates, the ellipse $\partial E_{a,b}$ is given by

\begin{equation*}
\mu=\mu_0, \qquad \qquad \mu_0 = \tanh^{-1} \frac{b}{a}.
\end{equation*}

\n The Laplacian in elliptic coordinates is given by \cite{moon}

\begin{equation*}
\Delta=\frac{2}{c^2\left(\cosh 2\mu-\cos 2\nu \right)}
\left( \partial_\mu^2 + \partial_\nu^2 \right).
\end{equation*}

\n Therefore, the Helmholtz equation \eqref{v Helm} becomes

\begin{equation*}
\partial_\mu^2 v + \partial_\nu^2 v
+ \frac{k^2c^2}{2}
\left( \cosh 2\mu-\cos 2\nu \right)v =0.
\end{equation*}

\n Separating variables $v(\mu,\nu)=X(\mu)Y(\nu)$ leads to the following ODEs:

\begin{equation} \label{X Y}
\begin{cases}
X''(\mu)-\left(\sigma-2q\cosh 2\mu\right)X(\mu)=0,
\\
Y''(\nu)+\left(\sigma-2q\cos 2\nu\right)Y(\nu)=0,
\end{cases}
\end{equation}

\n where $q = k^2c^2/4$ and $\sigma$ denotes the separation constant. The equation for $Y$ is known as the standard (or angular) Mathieu equation. Since $\nu$ is the angular variable, we impose periodic boundary conditions on $Y$, which restrict $\sigma$ to the eigenvalues of the Mathieu equation. There are two families of such eigenvalues, with corresponding eigenfunctions (see Chapter~28 of \cite{DLMF})

\begin{equation*}
\begin{split}
\sigma=a_n(q), \qquad Y(\nu) = \ce_n(\nu, q) \qquad \qquad n=0,1,...,
\\
\sigma=b_n(q), \qquad Y(\nu) = \se_n(\nu, q) \qquad \qquad n=1,2,....
\end{split}
\end{equation*}

\n These two families of functions reduce to cosines and sines when $q=0$, which explains the notation. Note that the equation for $X$ is obtained from that for $Y$ by setting $\nu=i\mu$. The equation for $X$ is known as the modified (or radial) Mathieu equation and the functions corresponding to the two families above are, respectively,

\begin{equation*}
\begin{split}
X(\mu) = \Ce_n(\mu, q) = \ce_n(i\mu, q), \qquad \qquad n=0,1,...,
\\
X(\mu) = \Se_n(\mu, q) = -i \se_n(i\mu, q), \qquad \qquad n=1,2,....
\end{split}
\end{equation*}

\n For fixed $Y$ and $\sigma$, the function $X$ satisfies a second-order ODE which has two linearly independent solutions. However, we are interested in pairs $(X,Y)$ for which the resulting function $v$ is smooth across the focal segment $\mu=0$. This requires $X$ to have the same parity as $Y$, leaving, up to normalization a unique admissible choice for $X$, namely the one given above. This explains why solutions $v$ of the Helmholtz equation inside the ellipse admit the series representation \eqref{v Matheiu series} in terms of Mathieu functions.

\subsection{Proof of Theorem~\ref{THM Mathieu}}

\n In view of the previous subsection, a single separated mode takes the form

\begin{equation*}
v(\bm{x}) = Y(i\mu) Y(\nu),
\end{equation*}

\n where $(\mu, \nu)$ are the elliptic coordinates of $\bm{x}$ and the function $Y(\nu)$ is either $\ce_n(\nu,q)$ for $n \geq 0$, or $\se_n(\nu,q)$ for $n\geq 1$. Recall that the point $\bm{x}_0$ is given by \eqref{x_0 ellipse}. In elliptic coordinates this point has the representation

\begin{equation*}
\bm{x}_0  \longleftrightarrow (\mu_0, i\mu_0).
\end{equation*}

\n Consequently,

\begin{equation*}
v(\bm{x}_0) = Y^2(i\mu_0).
\end{equation*}

\n Suppose that $v$ is nonscattering. Then the first-order nonscattering condition in \eqref{1st and 2nd ellipse} implies that $Y(i\mu_0)=0$. To arrive at the desired contradiction, we will show that the second-order nonscattering condition in \eqref{1st and 2nd ellipse} implies that $Y'(i\mu_0)=0$. Together, these two conditions imply that $Y$ is identically zero: indeed, $Y$ has zero Cauchy data at the point $i\mu_0$ and satisfies a linear ODE with entire coefficients (see \eqref{X Y}).

To rewrite the second-order condition we first express the operators $A$ and $B$ in elliptic coordinates.

\begin{lemma} \label{LEM A B elliptic}

The operators $A = \partial_{x_1}-i\partial_{x_2}$ and $B = \partial_{x_1}+i\partial_{x_2}$ take the following form in elliptic coordinates

\begin{equation*}
A=\frac{1}{c\sinh(\mu+i\nu)}
\left(\partial_\mu-i\partial_\nu\right),
\qquad \qquad
B=
\frac{1}{c\sinh(\mu-i\nu)}
\left(\partial_\mu+i\partial_\nu\right).
\end{equation*}

\end{lemma}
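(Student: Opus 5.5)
The plan is to work with the complex variable $z=x_1+ix_2$ and its conjugate $\bar z=x_1-ix_2$, treated as independent coordinates. In these coordinates
\begin{equation*}
\partial_{\bar z}=\tfrac12(\partial_{x_1}+i\partial_{x_2}), \qquad \partial_{z}=\tfrac12(\partial_{x_1}-i\partial_{x_2}),
\end{equation*}
so that $A=2\partial_z$ and $B=2\partial_{\bar z}$. Set $w=\mu+i\nu$ and $\bar w=\mu-i\nu$. From $x_1=c\cosh\mu\cos\nu$ and $x_2=c\sinh\mu\sin\nu$ we obtain
\begin{equation*}
z=x_1+ix_2=c\cosh(\mu+i\nu)=c\cosh w.
\end{equation*}
This follows from $\cosh(\mu+i\nu)=\cosh\mu\cos\nu+i\sinh\mu\sin\nu$. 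In the same way $\bar z=c\cosh\bar w$. Thus the change of variables $(\mu,\nu)\mapsto(x_1,x_2)$ is, in these coordinates, the holomorphic map $w\mapsto c\cosh w$ together with its conjugate.

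Since $z$ depends only on $w$ and $\bar z$ only on $\bar w$, the chain rule splits. Writing
\begin{equation*}
\partial_w=\tfrac12(\partial_\mu-i\partial_\nu), \qquad \partial_{\bar w}=\tfrac12(\partial_\mu+i\partial_\nu),
\end{equation*}
we get $\partial_w=\frac{dz}{dw}\,\partial_z=c\sinh w\,\partial_z$. Hence
\begin{equation*}
A=2\partial_z=\frac{2}{c\sinh w}\,\partial_w=\frac{1}{c\sinh(\mu+i\nu)}\left(\partial_\mu-i\partial_\nu\right),
\end{equation*}
and the analogous computation with $\bar w$ gives the formula for $B$.

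To justify the splitting without formal Wirtinger calculus, I would expand $\partial_\mu$ and $\partial_\nu$ by the chain rule in terms of $\partial_{x_1}$ and $\partial_{x_2}$. Forming $\partial_\mu-i\partial_\nu$ and using the Cauchy--Riemann structure of $c\cosh w$, the $\partial_{\bar z}$ component drops out. The only step needing care is the sign bookkeeping in $\partial_\mu\mp i\partial_\nu$. The identity holds wherever $\sinh w\neq0$, i.e.\ away from the foci. By analytic continuation it remains valid at complex points such as $(\mu_0,i\mu_0)$, where it will be applied to the analytically extended $v$.
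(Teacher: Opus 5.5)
Your proposal is correct and is essentially the paper's proof. The paper likewise sets $\xi=x_1+ix_2$, $\eta=x_1-ix_2$ with $A=2\partial_\xi$ and $B=2\partial_\eta$, writes $\xi=c\cosh(\mu+i\nu)$ and $\eta=c\cosh(\mu-i\nu)$, and expands $\partial_\mu,\partial_\nu$ by the chain rule to get $\partial_\mu-i\partial_\nu=2c\sinh(\mu+i\nu)\partial_\xi$; this is your identity $\partial_w=c\sinh w\,\partial_z$ written out, and the explicit chain-rule check you sketch in your last paragraph is exactly that computation.
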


\begin{proof}

\n Let us introduce the auxiliary variables

\begin{equation*}
\xi=x_1+ix_2, \qquad \qquad \eta=x_1-ix_2.
\end{equation*}

\n The chain rule readily implies

\begin{equation} \label{xi eta A B}
2 \partial_\xi =  \partial_{x_1}-i\partial_{x_2}  =
A,
\qquad \qquad
2 \partial_\eta = \partial_{x_1}+i\partial_{x_2} 
= B.
\end{equation}

\n In terms of the elliptic coordinates we have

\begin{equation*}
\xi=c\cosh(\mu+i\nu), \qquad \qquad \eta=c\cosh(\mu-i\nu).
\end{equation*}

\n Our goal is to express the $\mu,\nu$ derivatives in terms of the $\xi,\eta$ derivatives and then use the identities \eqref{xi eta A B} to rewrite the latter in terms of $A$ and $B$. By the chain rule,

\begin{equation*}
\begin{split}
\partial_\mu &= c\sinh(\mu+i\nu)\partial_\xi +
c\sinh(\mu-i\nu)\partial_\eta
\\
\partial_\nu &= ic\sinh(\mu+i\nu)\partial_\xi
- ic\sinh(\mu-i\nu)\partial_\eta.
\end{split}
\end{equation*}

\n Consequently,

\begin{equation*}
\begin{split}
\partial_\mu-i\partial_\nu =
2c\sinh(\mu+i\nu)\partial_\xi = c\sinh(\mu+i\nu)A
\\
\partial_\mu+i\partial_\nu = 2c\sinh(\mu-i\nu)\partial_\eta
= c\sinh(\mu-i\nu)B.
\end{split}
\end{equation*}

\n Solving for $A$ and $B$ yields the desired formulas.
\end{proof}

\n Next, we compute the expressions for $Av$, $Bv$ and $B^2v$ evaluated at $\bm{x}_0$.

\begin{lemma} \label{LEM A B B^2}
Assume that $Y(i\mu_0) = 0$, then

\begin{enumerate}
\item[$(i)$] $\displaystyle Av(\bm{x}_0)=\frac{1}{c}Y'(i\mu_0)^2$

\item[$(ii)$] $\displaystyle Bv(\bm{x}_0) = 0$

\item[$(iii)$] $\displaystyle B^2v(\bm{x}_0) = -\frac{c^2}{2a^2b^2} Y'(i\mu_0)^2$
\end{enumerate}

\end{lemma}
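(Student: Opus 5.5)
The plan is to work entirely in the complexified coordinates $w=\mu+i\nu$ and $z=\mu-i\nu$ suggested by Lemma~\ref{LEM A B elliptic}. There $\partial_\mu-i\partial_\nu=2\partial_w$ and $\partial_\mu+i\partial_\nu=2\partial_z$, so
\begin{equation*}
A=\frac{2}{c\sinh w}\,\partial_w, \qquad \qquad B=\frac{2}{c\sinh z}\,\partial_z .
\end{equation*}
Since $\mu=(w+z)/2$ and $\nu=(w-z)/(2i)$, the single separated mode becomes
\begin{equation*}
v=Y(\alpha)Y(\beta), \qquad \qquad \alpha=\tfrac{i}{2}(w+z), \qquad \beta=\tfrac{i}{2}(z-w).
\end{equation*}
The point $\bm{x}_0\leftrightarrow(\mu_0,i\mu_0)$ corresponds to $w=0$ and $z=2\mu_0$, where $\alpha=\beta=i\mu_0$. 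All quantities here are analytic, since $v$ and $Y$ are entire, so the formal chain-rule computations are legitimate near this point. I will also use
\begin{equation*}
\sinh 2\mu_0=\frac{2\tanh\mu_0}{1-\tanh^2\mu_0}=\frac{2ab}{c^2},
\end{equation*}
which gives $c\sinh z=2ab/c\neq0$ at the point. Hence $B$ is nonsingular there, while $A$ has the apparent singularity $\sinh w=0$.

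For $(ii)$ and $(iii)$, differentiating in $z$ moves $\alpha$ and $\beta$ together with speed $i/2$. This gives $\partial_z v=\tfrac{i}{2}\bigl(Y'(\alpha)Y(\beta)+Y(\alpha)Y'(\beta)\bigr)$, which vanishes at the point because $Y(i\mu_0)=0$; this proves $(ii)$. For $(iii)$, I write
\begin{equation*}
B^2v=\frac{2}{c\sinh z}\,\partial_z\!\left(\frac{2}{c\sinh z}\,\partial_z v\right).
\end{equation*}
Because $\partial_z v=0$ at the point, the term in which the derivative falls on the coefficient drops out, leaving $B^2v=\frac{4}{c^2\sinh^2 z}\partial_z^2v$. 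Next, $\partial_z^2 v=-\tfrac14\bigl(Y''Y+2Y'Y'+YY''\bigr)=-\tfrac12 Y'(i\mu_0)^2$ at the point. Inserting $c^2\sinh^2 2\mu_0=4a^2b^2/c^2$ then yields $-\frac{c^2}{2a^2b^2}Y'(i\mu_0)^2$.

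Part $(i)$ is the main point, because $\sinh w$ vanishes at $w=0$. The step is to note that $Av$ is analytic (it is a derivative of the analytic function $v$ in Cartesian coordinates), so its value at $\bm{x}_0$ equals the limit as $w\to0$ with $z=2\mu_0$ fixed. The numerator $2\partial_w v=i\bigl(Y'(\alpha)Y(\beta)-Y(\alpha)Y'(\beta)\bigr)$ vanishes at $w=0$, so I will apply L'Hôpital's rule in $w$.

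Along this line $\alpha$ moves with speed $i/2$ and $\beta$ with speed $-i/2$. Using $Y(i\mu_0)=0$, the $w$-derivative of the numerator at $w=0$ is
\begin{equation*}
i\left(-\tfrac{i}{2}Y'^2-\tfrac{i}{2}Y'^2\right)=Y'(i\mu_0)^2 .
\end{equation*}
The derivative of the denominator $c\sinh w$ at $w=0$ is $c$, which gives $Av(\bm{x}_0)=\frac1c Y'(i\mu_0)^2$. The only care needed is to justify that this limit equals the genuine value of $Av$, which follows from the analyticity just noted.
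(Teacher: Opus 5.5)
Your proof is correct and is essentially the paper's argument: it uses the same elliptic-coordinate expressions for $A$ and $B$, drops the term of $B^2v$ in which the derivative falls on the coefficient because $Bv(\bm{x}_0)=0$, and resolves the removable $0/0$ in $Av$ by a one-variable limit. The differences are cosmetic: you work in $w=\mu+i\nu$, $z=\mu-i\nu$ and take the limit along $z=2\mu_0$ with L'H\^opital, whereas the paper fixes $\mu=\mu_0$, lets $\nu\to i\mu_0$ and Taylor-expands; your remark that $Av$ is analytic in Cartesian coordinates also supplies the justification of that limit, which the paper omits.
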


\begin{proof}

$(i)$ Using Lemma~\ref{LEM A B elliptic} we compute

\begin{equation*}
Av(\bm{x})=i\frac{Y'(i\mu)Y(\nu)-Y(i\mu)Y'(\nu)}{c\sinh(\mu+i\nu)}.
\end{equation*}

\n Our goal is to evaluate this expression at $\bm{x}=\bm{x}_0$, or equivalently, in elliptic coordinates at $(\mu,\nu)=(\mu_0,i\mu_0)$, where $\mu_0>0$. At this point, both the numerator and the denominator vanish. A straightforward argument shows that the resulting singularity is removable and that the limit as $(\mu,\nu)\to(\mu_0,i\mu_0)$ exists. We omit these details and compute the limit by fixing $\mu=\mu_0$ and letting $\nu\to i\mu_0$. Thus, using $Y(i\mu_0)=0$, it remains to compute

\begin{equation*}
Av(\bm{x}_0)=\frac{i}{c} Y'(i\mu_0) \lim_{\nu\to i\mu_0}
\frac{Y(\nu)}{\sinh(\mu_0+i\nu)}.
\end{equation*}

\n To evaluate the limit, we expand the numerator about $\nu=i\mu_0$ using the analyticity of $Y$:

\begin{equation*}
Y(\nu)=Y'(i\mu_0)(\nu-i\mu_0)+
O\left((\nu-i\mu_0)^2\right).
\end{equation*}

\n As for the denominator, observe that $\mu_0+i\nu=i(\nu-i\mu_0)$ and hence

\begin{equation*}
\sinh(\mu_0+i\nu)=i(\nu-i\mu_0)+O\left((\nu-i\mu_0)^2\right).
\end{equation*}

\n The last two formulas complete the proof of this part.

\vspace{.1in}

$(ii)-(iii)$ Again invoking Lemma~\ref{LEM A B elliptic} we first compute

\begin{equation*}
Bv(\bm{x}) = i \frac{Y'(i\mu)Y(\nu)+Y(i\mu)Y'(\nu)
}{c\sinh(\mu-i\nu)},
\end{equation*}

\n which readily implies that $Bv(\bm{x}_0)=0$. To find $B^2v$, let us set $S(\mu, \nu) = c\sinh(\mu-i\nu)$, then applying the product rule gives

\begin{equation*}
B^2v(\bm{x}) = \frac{1}{S}\left[\left(\partial_{\mu}+i\partial_{\nu}
\right) \left(\frac{1}{S} \right)\right]
\left(\partial_{\mu}+i\partial_{\nu}\right) \left[ Y(i\mu) Y(\nu) \right]
+
\frac{1}{S^2}\left(\partial_{\mu}+i\partial_{\nu}\right)^2 \left[ Y(i\mu) Y(\nu) \right].
\end{equation*}

\n We will show that upon evaluating at $\bm{x}_0$, which is equivalent to evaluating the elliptic coordinates at $(\mu_0,i\mu_0)$, the first term on the right-hand side of the above expression vanishes. Indeed, this follows from

\begin{equation*}
\left(\partial_{\mu}+i\partial_{\nu}\right) \left[ Y(i\mu) Y(\nu) \right]
=iY'(i\mu)Y(\nu)+iY(i\mu)Y'(\nu).
\end{equation*}

\n Next we simplify

\begin{equation*}
\left(\partial_{\mu}+i\partial_{\nu}\right)^2 \left[ Y(i\mu) Y(\nu) \right] =  -Y''(i\mu)Y(\nu)-2Y'(i\mu)Y'(\nu)
-Y(i\mu)Y''(\nu),
\end{equation*}

\n which, upon evaluation at $(\mu_0,i\mu_0)$ and using $Y(i\mu_0)=0$ reduces to $-2 Y'(i\mu_0)^2$. Substituting these expressions into the formula for $B^2v$ we arrive at

\begin{equation*}
B^2v(\bm{x}_0) = -\frac{2}{c^2 \sinh^2(2\mu_0)} Y'(i\mu_0)^2. 
\end{equation*}

\n To conclude the proof it remains to use the identity $\sinh(2\mu_0) = 2ab/c^2$.

\end{proof}

\n In view of Lemma~\ref{LEM A B B^2}, we have $Bv(\bm{x}_0)=0$. Therefore, the second-order condition in \eqref{1st and 2nd ellipse} becomes 

\begin{equation*}
2a^2b^2  B^2v(\bm{x}_0) +  3c^3 Av(\bm{x}_0) = 0.
\end{equation*}

\n Invoking Lemma~\ref{LEM A B B^2} once again the expression simplifies to

\begin{equation*}
2c^2 Y'(i\mu_0)^2 = 0,
\end{equation*}

\n which yields the desired conclusion $Y'(i\mu_0)=0$.

\section{Proof of Theorem~\ref{THM reduced}} \label{SECT proof of reduction}
\setcounter{equation}{0}

\n In this section, we prove Theorem~\ref{THM reduced}. Our starting point is to show that the coefficient $\CR_N$ given by \eqref{R_n coeff} can be written in the form stated in the following theorem.

\begin{theorem} \label{THM R_N as L_N}
There exists a nonzero constant $c_N$ such that $\CR_N = c_N \CL_N v(\bm{x}_0)$, where $\CL_N$ is a differntial operator of the form 

\begin{equation} \label{CL_N form}
\CL_N = \sum_{n=0}^N a_{N,n}A^n + \sum_{n=1}^{2N}b_{N,n}B^n.
\end{equation} 

\n Here $a_{N,n}$ and $b_{N,n}$ are complex constants and, with the notation introduced in \eqref{g_n h_n},

\begin{equation} \label{a_N, b_N,2N}
a_{N,N} = \frac{(2N+1)!}{N!} g_2^N, \qquad \qquad b_{N, 2N} = h_1^{2N}.
\end{equation}

\end{theorem}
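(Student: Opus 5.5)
The plan is to expand every term of \eqref{R_n coeff} into derivatives of $v$ at $\bm{x}_0$ and collapse all mixed derivatives with the Helmholtz equation. Two facts drive this. By \eqref{xi eta A B}, for any function $f$ analytic near $\{\bm{x}(t)\}$,
\begin{equation*}
\frac{d}{dt} f(\bm{x}(t)) = \frac{g'(t)}{2} Af(\bm{x}(t)) + \frac{h'(t)}{2} Bf(\bm{x}(t)),
\end{equation*}
and $A$ and $B$ commute with constant-coefficient operators. Iterating, $\partial_t^m$ applied to $w(t)\,F(\bm{x}(t))$ is a finite sum of products of derivatives of $w,g',h'$ times monomials $A^iB^jF$. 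By \eqref{Helmholtz AB}, each such monomial equals $(-k^2)^{\min(i,j)}A^{i-j}$ or $(-k^2)^{\min(i,j)}B^{j-i}$. Hence, by Remark~\ref{REM residue}, $\CR_N=\mathcal{L}_NG(t_0,0)$ is automatically a linear combination of $A^nv(\bm{x}_0)$ and $B^nv(\bm{x}_0)$. This already gives the shape \eqref{CL_N form}, up to the two claims about the ranges of $n$ and the values of the top coefficients.

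For the $\epsilon$-dependence, I would note that $\rho(\epsilon)=\tfrac{\kappa^2}{2}\epsilon+O(\epsilon^3)$ and that $x_2=(g-h)/(2i)$. Thus $\partial_\epsilon^{N-n}G(t,0)$ only introduces scalar factors that are polynomials in $x_2(t)$ and its $t$-derivatives, together with powers of $\kappa$. These factors carry no extra $A$ or $B$ and affect only lower-order coefficients.

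Next I bound the orders by counting. The term of $\CR_N$ with index $n$ applies at most $2n$ $t$-derivatives, through $P_n(\partial_t)$, to $G$. $G$ itself contains one more $t$-derivative of $Av$, plus $h'v$.

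The $B$-count is the easier one. Each $t$-derivative produces at most one $B$, with coefficient $h'/2$. The top $B$-power therefore comes only from $n=N$, with $\epsilon$-order $0$, and with every derivative landing on $v$ as $\tfrac{h_1}{2}B$. The $A$ in $\tfrac{d}{dt}[Av]$ absorbs one $B$ via $AB=-k^2$. Both pieces of $G$ then yield a multiple of $h_1^{2N+1}B^{2N}v(\bm{x}_0)$, with prefactor $c_{N,0}\,\Gamma(N+\tfrac12)$ times a combination of $\kappa^2$ and $k^2$. I must check that this combination is nonzero, using $q_0\neq1$. Dividing it out defines $c_N$ and normalizes $b_{N,2N}=h_1^{2N}$.

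The main obstacle is the $A$-count. Every $A$ generated by differentiation comes with a factor $g'(t)$, and $g'(t_0)=0$, $g''(t_0)=g_2$. So each surviving $A$ costs two derivatives. A naive count on $\partial_t^{2n+1}[Av]$ gives $A^{n+1}$, one more than claimed. I would remove this with the residue identity $\Res_{t_0}F'=0$, which gives
\begin{equation*}
\Res_{t=t_0}\frac{F'}{p^{n+\frac12}} = -\left(n+\tfrac12\right)\Res_{t=t_0}\frac{F\,g'}{p^{n+\frac32}},
\end{equation*}
using $p'=-g'$. This transfers the outer derivative in $G$ onto the weight $p^{-n-1/2}$, where it produces the factor $g'$ rather than an extra $A$.

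After this step the relevant quantity is $\Res\,[g'Av/p^{n+3/2}]$, which involves $2n+2$ derivatives. The factor $g'$ uses one of them, leaving $2n+1$ derivatives and hence at most $n$ further $A$'s. The naive count would then give $A^{n+1}$ again, so I need to verify carefully that the top $A$-term telescopes against the contribution of $\tfrac{\kappa^2}{2}h'v$. I would first try to verify this with $N=1$ against \eqref{1st and end cond v}, and only then do the general bookkeeping. In that bookkeeping I keep only the terms in which each $A$ is paired with exactly one derivative of $g'$ at $t_0$. This contributes $(g_2/2)^N$ together with combinatorial factors from Leibniz's rule and from $c_{N,0}=\tilde p(t_0)^{-N-1/2}/(2N)!$. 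Combined with $\Gamma(N+\tfrac12)$ and divided by the same $c_N$, this should give $a_{N,N}=\tfrac{(2N+1)!}{N!}g_2^N$. Because the $A$- and $B$-contributions carry the same normalization, this is also a consistency check on the choice of $c_N$.
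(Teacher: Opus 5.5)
Your treatment of $b_{N,2N}$ is correct and matches the paper. Only $n=N$ at $\epsilon$-order zero reaches $B^{2N}$, the two pieces of $G$ combine to $(\kappa^2-k^2)(h_1/2)^{2N+1}$, and $q_0\neq1$, $h_1\neq0$ make this nonzero. The $A$-side, however, has two genuine gaps.

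\textbf{The $A^{N+1}$ coefficient.} This coefficient cannot ``telescope against'' the $\tfrac{\kappa^2}{2}h'v$ term. At $n=N$ that term receives only $2N$ derivatives, so by your own count it produces at most $A^N$. The $A^{N+1}$ coefficient must therefore vanish on its own. Your integration by parts (the same one the paper uses) does not show this by itself, as you note. The missing idea is to Taylor expand about $\bm x_0$ instead of counting derivatives. One has $(\bm x(t)-\bm x_0)\cdot\nabla=\xi A+\eta B$ with $\xi=\tfrac12(g(t)-g(t_0))=-\tfrac12 p(t)$. Hence $A^\mu v(\bm x(t))=\sum_{j,\ell}\frac{\xi^j\eta^\ell}{j!\,\ell!}A^{\mu+j}B^\ell v(\bm x_0)$, and every $A$ cancels exactly one power of $p$ in the residue. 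The $A^{N+1}$ coefficient then becomes a multiple of $\Res_{t=t_0}p'/p^{3/2}=-2\Res_{t=t_0}(p^{-1/2})'=0$. The same identity, in the form $\Res_{t=t_0}p'/p^{5/2}=0$, is what justifies discarding the $n=N$ contributions to $A^N$ in which extra derivatives fall on $g'$ or $\tilde p$. Your rule to ``keep only the terms in which each $A$ is paired with exactly one derivative of $g'$'' drops these without a reason.

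\textbf{The $\epsilon$-dependence of $a_{N,N}$.} You claim the $\epsilon$-derivatives ``affect only lower-order coefficients.'' This is true for $B$ but false for $A$. The summand $n=N-1$ applies $2N-1$ $t$-derivatives to $\alpha_1 Av$, where $\alpha_1=\partial_\epsilon e^{i\rho(\epsilon)x_2}|_{\epsilon=0}=\tfrac{i\kappa^2}{2}x_2$. It therefore reaches $A^{(N-1)+1}=A^N$. Its contribution is nonzero: in the paper's notation it is $\Phi_{N-1}(N-1,0)$. It adds $N\kappa^2$ to the bracket $\tfrac{\kappa^2}{2}-k^2(N+\tfrac12)$ coming from $n=N$. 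Only the sum, $(N+\tfrac12)(\kappa^2-k^2)$, carries the factor $\kappa^2-k^2$ shared with $b_{N,2N}$.

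Suppose you compute $a_{N,N}$ from $n=N$ alone, using $c_{N,0}$ and $\Gamma(N+\tfrac12)$ as you propose. Dividing by your $c_N$ then gives $\frac{(2N+1)!}{N!}g_2^N\cdot\frac{\kappa^2-(2N+1)k^2}{(2N+1)(\kappa^2-k^2)}$ instead of $\frac{(2N+1)!}{N!}g_2^N$. Your planned $N=1$ check against \eqref{1st and end cond v} would already fail, since there $L_1$ is independent of $k$ and $q_0$.
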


Before proving this theorem, let us show that it implies Theorem~\ref{THM reduced}. Indeed, if the incident wave $v$ is nonscattering, then all the coefficients $\CR_N$ must vanish, and hence so must all the quantities $\CL_Nv(\bm{x}_0)$. Fix $N>1$ and write the equation $\CL_Nv(\bm{x}_0)=0$ explicitly as

\begin{equation} \label{CL_N=0}
\sum_{n=0}^N a_{N,n}A^n v(\bm{x}_0)
+\sum_{n=1}^{2N}b_{N,n}B^n v(\bm{x}_0)=0.
\end{equation}

\n The idea is simple: we use the preceding equations $\CL_jv(\bm{x}_0)=0$, $j<N$ recursively to eliminate all terms $A^jv(\bm{x}_0)$ with $j<N$. Indeed, the equation corresponding to $j=0$ gives $v(\bm{x}_0)=0$, so the zeroth-order term in \eqref{CL_N=0} vanishes. Next, since $a_{1,1}\neq0$, the equation $\CL_1v(\bm{x}_0)=0$ allows us to express $Av(\bm{x}_0)$ as a linear combination of the terms $B^\ell v(\bm{x}_0)$, $\ell=1,2$. Substituting this expression into \eqref{CL_N=0} eliminates the term involving $Av(\bm{x}_0)$. Continuing recursively, at the final step we express $A^{N-1}v(\bm{x}_0)$ as a linear combination of the terms $B^\ell v(\bm{x}_0)$, $\ell=1,\ldots,2(N-1)$ and arrive at the equation

\begin{equation} \label{CL_N reduced}
a_{N,N}A^N v(\bm{x}_0)+\sum_{n=1}^{2N}\tilde{b}_{N,n}B^n v(\bm{x}_0)=0.
\end{equation}

\n It remains to observe that these substitutions do not alter the coefficient of the highest-order term $B^{2N}$. Consequently, $\tilde{b}_{N,2N} = b_{N,2N}$. The above equation is therefore precisely $L_Nv(\bm{x}_0)=0$, where $L_N$ is given by \eqref{L_N form}. This concludes the proof of Theorem~\ref{THM reduced}.


\subsection{Proof of Theorem~\ref{THM R_N as L_N}} \label{SECT proof of R_N as L_N}
    
The quantity $\CR_N$ in \eqref{R_n coeff} is expressed in terms of residues involving the functions $\partial_\epsilon^n G(t,0)$ for various indices $n$. It will therefore be useful to rewrite these partial derivatives in a more convenient form. From the definition of $G$ in \eqref{G}, its $\epsilon$-derivatives can be expressed in terms of the functions

\begin{equation} \label{alpha_n beta_n}
\alpha_n(t)=\left.\partial_\epsilon^n \left( e^{i\rho(\epsilon)x_2(t)} \right) \right|_{\epsilon=0},
\qquad \qquad
\beta_n(t)=\left.\partial_\epsilon^n
\left( \bigl(\kappa^2-\rho(\epsilon)^2\bigr)
e^{i\rho(\epsilon)x_2(t)}\right)\right|_{\epsilon=0}.
\end{equation}

\n We also introduce

\begin{equation} \label{eta}
\eta(t) = \frac{h(t) - h(t_0)}{2}.
\end{equation}

\begin{remark}
\normalfont This section is independent of Sections~\ref{SECT app ellipse} and \ref{SECT proof ellipse}. Accordingly, we reuse the notation $\eta$ (and, later in the section, $\xi$ and $\mu$) for different purposes. This should cause no confusion, since the section is self-contained.
\end{remark}

\n The next lemma is a key result showing that $\CR_N$ can be expressed as a linear combination of mixed terms $A^{j+1}B^\ell$ satisfying $2j+\ell\leq 2N+1$. Moreover, the coefficients in this representation are given by explicit formulas involving residues of the auxiliary functions introduced above. We also recall that $p(t)=g(t_0)-g(t)$. We postpone the proof of the lemma until the end of the section and first show how it completes the proof of Theorem~\ref{THM R_N as L_N}. 

\begin{lemma} \label{LEM R_N main}
We have the representation $\displaystyle \CR_N = \CR_N^{I} + \CR_N^{II}$, where

\begin{equation} \label{R_N I and II}
\CR_N^{I} = \sum_{n=0}^N \hspace{-.05in} \sum_{\substack{j, \ell \geq 0 \\[.05in] 2j+\ell \leq 2n+1}} \hspace{-.1in} \Phi_{n}(j, \ell) A^{j+1}B^\ell v(\bm x_0),
\qquad \qquad
\CR_N^{II} = \sum_{n=0}^N \hspace{-.05in} \sum_{\substack{j, \ell \geq 0 \\[.05in] 2j+\ell \leq 2n}} \hspace{-.1in} \Psi_{n}(j, \ell) A^{j}B^\ell v(\bm x_0) .
\end{equation}

\n The coefficients are given by the following formulas

\begin{equation*}
\Phi_{n}(j, \ell) = \left( n + \tfrac{1}{2} \right) c_n(j,\ell) \,
\Res_{t=t_0} \frac{\alpha_{N-n} \,  \eta^\ell \, p'}{p^{n -j + \frac{3}{2}}},  
\qquad \qquad
\Psi_{n}(j, \ell) = \tfrac{1}{2} c_n(j,\ell) \,
\Res_{t=t_0}\frac{\beta_{N-n} \, \eta^\ell \, h'}{p^{n-j+\frac12}},
\end{equation*}

\n where

\begin{equation*}
c_n(j,\ell) = \frac{(-1)^j \, \Gamma \! \left( n + \tfrac{1}{2} \right)}{2^{j} \, (N-n)! \, j! \, \ell!}.
\end{equation*}
    
\end{lemma}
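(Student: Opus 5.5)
We start from formula \eqref{R_n coeff} of Theorem~\ref{THM 1}, namely $\CR_N=\sum_{n=0}^N \frac{\Gamma(n+\frac12)}{(N-n)!}R_n[\partial_\epsilon^{N-n}G(t,0)]$. Differentiating \eqref{G} in $\epsilon$ at $\epsilon=0$ and using \eqref{alpha_n beta_n} gives
\begin{equation*}
\partial_\epsilon^{m}G(t,0)=\frac{d}{dt}\Big[\alpha_m(t)\,Av(\bm x(t))\Big]+\tfrac12\,\beta_m(t)\,h'(t)\,v(\bm x(t)),
\end{equation*}
with $m=N-n$. The first piece will produce $\CR_N^{I}$ and the second $\CR_N^{II}$.

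\textbf{Step 1 (first piece).} Integrate by parts in the residue. Since $\frac{d}{dt}\big[F\,p^{-n-\frac12}\big]$ has zero residue (the function $p^{-n-\frac12}$ is single valued near $t_0$ up to the branch convention of Remark~\ref{REM branch}, so its product with $F$ is a derivative of a meromorphic function), we get
\begin{equation*}
R_n\Big[\tfrac{d}{dt}(\alpha_m Av)\Big]=\big(n+\tfrac12\big)\Res_{t=t_0}\frac{\alpha_m\,p'\,Av(\bm x(t))}{p^{n+\frac32}}.
\end{equation*}
This explains the prefactor $(n+\frac12)$ and the $p'$ in $\Phi_n$.

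\textbf{Step 2 (Taylor expansion of $v$ along the curve in $\xi,\eta$ variables).} Write $w=Av$. With $\xi=x_1+ix_2$ and $\eta_{\rm var}=x_1-ix_2$ we have $\partial_\xi=A/2$ and $\partial_\eta=B/2$. The point $\bm x(t)$ corresponds to $(\xi,\eta)=(g(t),h(t))$. Taylor expand the analytic function $w$ (using (H5)) around $\bm x_0$:
\begin{equation*}
w(\bm x(t))=\sum_{j,\ell}\frac{(g(t)-g_0)^j (h(t)-h_0)^\ell}{2^{j+\ell}\,j!\,\ell!}\,A^jB^\ell w(\bm x_0).
\end{equation*}
Now $g-g_0=-p$ and $(h-h_0)/2=\eta(t)$ by \eqref{eta}. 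Each term therefore equals $\frac{(-1)^j}{2^j j!\ell!}p^j\eta^\ell A^{j+1}B^\ell v(\bm x_0)$. Inserting this into the residue turns $p^{-n-\frac32}$ into $p^{-(n-j)-\frac32}$, and multiplying by $\Gamma(n+\frac12)/(N-n)!$ yields exactly $\Phi_n(j,\ell)$ with $c_n(j,\ell)$. The same expansion applied to $v$ in the second piece gives $\Psi_n(j,\ell)$ directly, with the $\frac12$ coming from \eqref{G}.

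\textbf{Step 3 (truncation).} Only finitely many $(j,\ell)$ contribute, so we must justify the ranges $2j+\ell\le 2n+1$ in $\CR_N^{I}$ and $2j+\ell\le 2n$ in $\CR_N^{II}$. Here $\eta$ vanishes to first order at $t_0$, since $h'(t_0)\ne0$, and $p$ vanishes to exactly second order. In the first piece the integrand behaves like $(t-t_0)^{\ell+1-2(n-j)-3}$ times a unit, because $p'$ has a simple zero. The residue vanishes once $\ell+1-2n+2j-3\ge0$, i.e. when $2j+\ell\ge 2n+2$. In the second piece, where $h'$ is a unit, the residue vanishes when $\ell-2n+2j-1\ge0$, i.e. when $2j+\ell\ge2n+1$. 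The functions $\alpha_m$ and $\beta_m$ are entire in $t$ and only help.

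\textbf{Main obstacle.} The delicate point is interchanging the infinite Taylor sum with the residue. We handle it by working on a small circle around $t_0$. On that circle the double series converges uniformly, since $\bm x(t)$ stays in a polydisk of analyticity about $\bm x_0$, so the series can be integrated term by term. The residue is then this contour integral divided by $2\pi i$, so termwise evaluation is legitimate, and the vanishing argument of Step 3 leaves only finitely many terms. We also need to check that the branch of $p^{1/2}$ is used consistently in the integration by parts; this is fine because $p^{-n-\frac12}=(t-t_0)^{-2n-1}\tilde p^{-n-\frac12}$ is meromorphic.
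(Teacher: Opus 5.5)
Your proof is correct and follows essentially the same route as the paper. It splits $\partial_\epsilon^{N-n}G(t,0)$ into a total $t$-derivative of $\alpha_m Av$ plus $\tfrac12 h'\beta_m v$, moves the derivative onto $p^{-n-\frac12}$ inside the residue, Taylor expands $Av$ and $v$ in the variables $(x_1+ix_2,\,x_1-ix_2)$ using $g-g(t_0)=-p$, and truncates by counting orders of vanishing at $t_0$. Your justification of the termwise residue evaluation (uniform convergence on a small circle) is a point the paper leaves implicit. The only slip is calling $\alpha_m,\beta_m$ entire: they are polynomials in $x_2(t)$, hence merely analytic near $t_0$, which is all your argument uses.
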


\vspace{.1in}

\n \textbf{Reduction of mixed terms.} Recall that $v$ satisfies the Helmholtz equation, and hence $ABv=BAv=-k^2v$ (see \eqref{Helmholtz AB}). Therefore, every mixed term in \eqref{R_N I and II} can be reduced as follows:

\begin{equation*}
A^jB^\ell v
=
\begin{cases}
(-k^2)^\ell A^{j-\ell}v \quad
& j\ge\ell,\\[1ex]
(-k^2)^jB^{\ell-j}v
& \ell\ge j.
\end{cases}
\end{equation*}

\n Clearly, $\CR_N^I$ contains higher powers of $A$ and $B$. We therefore consider a generic term $A^{j+1}B^\ell v$ and distinguish two cases:

\begin{enumerate}
\item[$\bullet$] If $\ell \geq j+1$, then this term reduces to $B^{\ell-j-1} v$, and we have $\ell - j -1 \leq 2N$.

\item[$\bullet$] If $\ell \leq j+1$, then this term reduces to $A^{j+1-\ell} v$, and we have $j+1-\ell \leq N+1$
\end{enumerate}

\n Thus, the powers of $B$ can reach order $2N$, while those of $A$ can reach order $N+1$. Consequently, Lemma~\ref{LEM R_N main} yields

\begin{equation*}
\CR_N = \CL_N v (\bm{x}_0),
\end{equation*}

\n where $\CL_N$ is a linear differential operator of the form

\begin{equation*}
\CL_N = \sum_{n=0}^{N+1} a_{N,n}A^n + \sum_{n=0}^{2N}b_{N,n}B^n.
\end{equation*}

\n Here, $a_{N,n}$ and $b_{N,n}$ are complex constants. We slightly abuse notation by using the same symbols for these coefficients as for those of the operator $\CL_N$ in \eqref{CL_N form} of Theorem~\ref{THM R_N as L_N}. The two sets of coefficients differ only by a common multiplicative constant $c_N$, see \eqref{c_N} below. We now note that, since there are two constants corresponding to the $n=0$ terms, we may normalize $b_{N,0}=0$. To complete the proof, it remains to show that $a_{N,N+1}=0$, so that the highest power of $A$ is at most $N$, and to compute $a_{N,N}$ and $b_{N,2N}$. We do this next.

\vspace{.05in}

\n \textbf{The coefficient $a_{N,N+1}$.} In view of Lemma~\ref{LEM R_N main}, the highest power $A^{N+1}$ arises only from $\CR_N^I$, specifically from the summand with $n=N$ and the term $A^{j+1}B^\ell$ corresponding to $j=N$ and $\ell=0$. Consequently, its coefficient is given by

\begin{equation*}
a_{N,N+1} = \Phi_N(N,0) =  \left( N + \tfrac{1}{2} \right) c_N(N, 0) \Res_{t=t_0} \frac{p'}{p^{\frac{3}{2}}},
\end{equation*}

\n where in the last step we used that $\alpha_0(t)=1$. But note that
\[
\frac{p'}{p^{3/2}}=-2\left(p^{-1/2}\right)'
\]
is a total derivative and therefore its residue vanishes. Hence, $a_{N,N+1}=0$.

\vspace{.1in}

\n \textbf{The coefficient $b_{N,2N}$.} In view of Lemma~\ref{LEM R_N main}, the highest power $B^{2N}$ receives contributions from both $\CR_N^I$ and $\CR_N^{II}$, in each case from the summand with $n=N$. The contribution from $\CR_N^I$ arises from the term $A^{j+1}B^\ell$ with $j=0$ and $\ell=2N+1$, namely $AB^{2N+1}$, which reduces by the Helmholtz equation to $-k^2B^{2N}$. The contribution from $\CR_N^{II}$ arises from the term $A^jB^\ell$ with $j=0$ and $\ell=2N$. Adding these two contributions, we obtain the coefficient

\begin{equation*}
b_{N,2N} = -k^2 \Phi_N(0, 2N+1) + \Psi_N(0, 2N).
\end{equation*}

\n Using the definitions of the coefficients $\Phi$ and $\Psi$ from Lemma~\ref{LEM R_N main}, together with $\alpha_0=1$ and $\beta_0=\kappa^2$, the above equation simplifies to

\begin{equation*}
b_{N,2N} = \frac{\Gamma \left( N + \tfrac{1}{2} \right)}{2 \, (2N)!} \left( -k^2 \Res_{t=t_0} \frac{\eta^{2N+1} p'}{p^{N+\frac32}} + \kappa^2 \Res_{t=t_0} \frac{\eta^{2N} h'}{p^{N+\frac12}}  \right).
\end{equation*}

\n These residues can be computed using Taylor expansions. To this end, set $\tau=t-t_0$. Then, with $h_1=h'(t_0)$, we have

\begin{equation} \label{eta Taylor}
\eta(t) = \frac{h(t)-h(t_0)}{2} = \frac{h_1}{2} \tau + O(\tau^2), \qquad \qquad
h'(t) = h_1  + O(\tau).
\end{equation}

\n Further, recall that $\tilde{p}$ is defined by the identity $p(t)=\tau^2\tilde{p}(t)$ and therefore

\begin{equation} \label{p p ' Taylor}
p(t) = \tilde{p}_0 \tau^2 + O(\tau^3)
\qquad \qquad
p'(t) = 2 \tilde{p}_0 \tau + O(\tau^2),
\end{equation}

\n where we have set $\tilde{p}_0 = \tilde{p}(t_0)$. These expansions readily imply

\begin{equation*}
\Res_{t=t_0} \frac{\eta^{2N+1} p'}{p^{N+\frac32}} = \frac{2 \left(h_1/2\right)^{2N+1}}{\tilde{p}_0^{N+\frac{1}{2}}}, 
\qquad \qquad
\Res_{t=t_0} \frac{\eta^{2N} h'}{p^{N+\frac12}} = \frac{h_1 \left(h_1/2\right)^{2N}}{\tilde{p}_0^{N+\frac{1}{2}}}.
\end{equation*}

\n The two residues are therefore equal, and we arrive at

\begin{equation} \label{b_N,2N}
b_{N,2N} = (\kappa^2 - k^2) \frac{\Gamma \left( N + \tfrac{1}{2} \right)}{(2N)!} \frac{ \left(h_1/2\right)^{2N+1}}{\tilde{p}_0^{N+\frac{1}{2}}}.
\end{equation}

\vspace{.1in}

\n \textbf{The coefficient $a_{N,N}$.} Again using Lemma~\ref{LEM R_N main}, we see that $A^N$ arises in $\CR_N^{II}$ only from the summand with $n=N$, specifically from the term $A^jB^\ell$ with $j=N$ and $\ell=0$. This contribution has coefficient $\Psi_N(N,0)$. In $\CR_N^I$, the term $A^N$ arises from three different summands.

\begin{enumerate}
\item[$\bullet$] The summand with $n=N$, from the term $A^{j+1}B^\ell$ with $j=N-1$ and $\ell=0$. This contribution has coefficient $\Phi_N(N-1,0)$. However, this coefficient vanishes by the same argument used to show that $a_{N,N+1}=\Phi_N(N,0)=0$. Indeed, $\Phi_N(N-1,0)$ is a multiple of the residue of $p'/p^{5/2}$, which is a total derivative and hence has zero residue.

\item[$\bullet$] The summand with $n=N$, from the term $A^{j+1}B^\ell$ with $j=N$ and $\ell=1$. In this case the term is $A^{N+1}B$ which reduces, by the Helmholtz equation, to $-k^2A^N$. Thus, this contribution has coefficient $-k^2\Phi_N(N,1)$.

\item[$\bullet$] The summand with $n=N-1$, from the term $A^{j+1}B^\ell$ with $j=N-1$ and $\ell=0$. This contribution has coefficient $\Phi_{N-1}(N-1,0)$.
\end{enumerate}

\n Collecting all of these contributions, we obtain

\begin{equation*}
a_{N,N} = \Psi_N(N, 0) -k^2 \Phi_N(N, 1) + \Phi_{N-1}(N-1, 0).
\end{equation*}

\n Using the definitions of the coefficients $\Phi$ and $\Psi$ from Lemma~\ref{LEM R_N main}, together with the identity 
$$\left(N-\tfrac{1}{2}\right) \Gamma\! \left(N-\tfrac{1}{2}\right) = \Gamma \! \left(N+\tfrac{1}{2}\right)$$ 

\n we obtain

\begin{equation} \label{a_N,N inter}
a_{N,N} = \frac{(-1)^N \, \Gamma \! \left( N + \tfrac{1}{2} \right)}{2^N \, N!} \left[ \frac{\kappa^2}{2} \Res_{t=t_0} \frac{h'}{p^{\frac{1}{2}}} - k^2 \left( N + \tfrac{1}{2} \right) \Res_{t=t_0} \frac{\eta \, p'}{p^{\frac{3}{2}}} - 2N \Res_{t=t_0} \frac{\alpha_1 \, p'}{p^{\frac{3}{2}}}  \right]. 
\end{equation}

\n Using the Taylor expansions \eqref{eta Taylor} and \eqref{p p ' Taylor} we find

\begin{equation} \label{aaa}
\Res_{t=t_0} \frac{h'}{p^{\frac{1}{2}}} = \Res_{t=t_0} \frac{\eta \, p'}{p^{\frac{3}{2}}} = \frac{h_1}{ \tilde{p}_0^{\frac{1}{2}}}.
\end{equation}

\n It remains to compute the last residue in \eqref{a_N,N inter}. To this end, using the identity $\rho'(0)=\kappa^2/2$ in the definition of $\alpha_1$, we obtain

\begin{equation*}
\alpha_1(t) = \frac{i \kappa^2}{2} x_2(t).
\end{equation*}

\n On the other hand, recalling that $g=x_1+ix_2$, $h=x_1-ix_2$ and $p=g_0-g$, with $g_0=g(t_0)$ constant, we may express

\begin{equation*}
-\frac{1}{2} p(t) - \eta(t) = i \left(x_2(t) - x_2(t_0) \right).
\end{equation*}

\n As a result

\begin{equation*}
\alpha_1(t) = - \frac{\kappa^2}{4} p(t) - \frac{\kappa^2}{2} \eta(t) + \frac{i\kappa^2}{2} x_2(t_0). 
\end{equation*}

\n The first and last terms do not contribute to the last residue in \eqref{a_N,N inter} involving $\alpha_1$. Indeed, as seen above, they give rise to residues of total derivatives, which vanish. Consequently,

\begin{equation*}
\Res_{t=t_0} \frac{\alpha_1 \, p'}{p^{\frac{3}{2}}} = -\frac{\kappa^2}{2} \Res_{t=t_0} \frac{\eta \, p'}{p^{\frac{3}{2}}}.
\end{equation*}

\n The residue on the right-hand side has already been computed in \eqref{aaa}. Combining the above identities, we arrive at

\begin{equation*}
a_{N,N} = \frac{(-1)^N \, \Gamma \! \left( N + \tfrac{1}{2} \right)}{2^N \, N!} \, \frac{h_1}{ \tilde{p}_0^{\frac{1}{2}}} \left[ \frac{\kappa^2}{2} - k^2 \left( N + \tfrac{1}{2} \right) + N \kappa^2 \right].
\end{equation*}

\n The expression in square brackets can be simplified further by factoring out the term $\kappa^2-k^2$. This yields

\begin{equation} \label{a_N,N}
a_{N,N} = \left( \kappa^2-k^2 \right) \left( N + \tfrac{1}{2} \right)  \frac{(-1)^N \, \Gamma \! \left( N + \tfrac{1}{2} \right) h_1}{2^N \, N! \tilde{p}_0^{\frac{1}{2}}} 
\end{equation}

\vspace{.1in}

In summary, we have established that $\CR_N = \CL_N v (\bm{x}_0)$, where the differential operator $\CL_N$ takes the form

\begin{equation*}
\CL_N = \sum_{n=0}^{N} a_{N,n}A^n + \sum_{n=1}^{2N}b_{N,n}B^n.
\end{equation*}

\n and the coefficients $a_{N,N}$ and $b_{N,2N}$ are given by \eqref{a_N,N} and \eqref{b_N,2N}, respectively. To conclude the proof of Theorem~\ref{THM R_N as L_N} and obtain the normalization for which these coefficients take the simpler forms in \eqref{a_N, b_N,2N}, it remains only to observe that the condition $\CL_Nv(\bm{x}_0)=0$ is unchanged if $\CL_N$ is multiplied by any nonzero constant. We may therefore normalize by canceling the factor

\begin{equation} \label{c_N}
c_N = \frac{\Gamma \! \left( N + \tfrac{1}{2} \right)}{2^{2N+1} \, (2N)!} \, \frac{h_1}{\tilde{p}_0^{N+\frac{1}{2}}}
\end{equation}

\n from the formulas \eqref{a_N,N} and \eqref{b_N,2N}. Using the identity $\tilde{p}_0=-g_2/2$ from \eqref{p and p tilde}, the coefficients then reduce to the simpler forms given in \eqref{a_N, b_N,2N}. It remains only to prove Lemma~\ref{LEM R_N main}, which we do in the next subsection.

\subsection{Proof of Lemma~\ref{LEM R_N main}}

Let us set $r=N-n$, and let $\alpha_r$ and $\beta_r$ be defined by \eqref{alpha_n beta_n}. In addition, introduce the functions

\begin{equation} \label{phi psi}
\varphi_r(t) = \alpha_r(t)Av(\bm x(t)), \qquad \qquad \psi_r(t) =
\frac12 h'(t) \beta_r(t) v(\bm x(t)).
\end{equation}

\n Then, from the definition of $G$ in \eqref{G} and the fact that $\partial_\epsilon$ commutes with $\partial_t$, we obtain the representation 

\begin{equation} \label{G partial r}
\partial_\epsilon^rG(t, 0) = \varphi_r'(t) + \psi_r(t),
\end{equation}

\n therefore \eqref{R_n coeff} implies that $\CR_N = \CR_N^I + \CR_N^{II}$, where

\begin{equation} \label{R_N I and II inter}
\CR_N^I = \sum_{n=0}^N \frac{1}{r!} \Gamma \! \left( n + \tfrac{1}{2} \right) R_{n}[ \varphi'_r] ,
\qquad \qquad
\CR_N^{II} = \sum_{n=0}^N \frac{1}{r!} \Gamma \! \left( n + \tfrac{1}{2} \right) R_n[ \psi_r ].
\end{equation}

\n Here, recall that $R_n$ denotes the residue operator defined in \eqref{R_n}. Thus, our goal is to analyze these residues. To this end, since the functions $\phi_r$ and $\psi_r$ involve $v(\bm{x}(t))$ and $Av(\bm{x}(t))$, our first step is to expand these functions in Taylor series.

\vspace{.1in}

\n \textbf{Taylor expansion.} Expanding $v$ and $Av$ in Taylor series about the point $\bm{x}_0$, we obtain, for $\mu=0,1$,

\begin{equation} \label{A mu}
A^\mu v(\bm x(t)) = \sum_{m=0}^{\infty} \frac1{m!}
\left[ (\bm x(t)-\bm x_0)\cdot \nabla \right]^m A^\mu v(\bm x_0).
\end{equation}

\n To rewrite the above derivatives in a convenient form, we recall the definition of the function $\eta$ and introduce, in addition, the function $\xi$ as follows:

\begin{equation*}
\xi(t)= \frac{g(t)-g(t_0)}2,
\qquad \qquad
\eta(t) = \frac{h(t)-h(t_0)}2,
\end{equation*}

\n where, as before, $g=x_1+ix_2$ and $h=x_1-ix_2$. Rewriting the partial derivatives in terms of the operators $A$ and $B$ via

\begin{equation*}
\partial_{x_1} = \frac{B+A}{2}, \qquad \qquad \partial_{x_2} = \frac{B-A}{2i}
\end{equation*}

\n we obtain

\begin{equation*}
(\bm x(t)-\bm x_0)\cdot \nabla = \xi(t) A + \eta(t) B.
\end{equation*}

\n Substituting the latter expression into \eqref{A mu} and applying the binomial formula, we obtain

\begin{equation*}
A^\mu v(\bm x(t)) = \sum_{m=0}^{\infty} \frac1{m!}
\sum_{\substack{j,\ell\ge0\\j+\ell=m}} \binom{m}{j}
\xi(t)^j\eta(t)^\ell A^{\mu + j}B^\ell v(\bm x_0),
\end{equation*}

\n or equivalently,

\begin{equation} \label{Taylor}
A^\mu v(\bm x(t)) = \sum_{j,\ell \geq 0} 
\frac{\xi(t)^j\eta(t)^\ell}{j!\,\ell!} A^{\mu +j}B^\ell v(\bm x_0). 
\end{equation}

\vspace{.1in}

\n \textbf{The residues.} Let us analyze the residues in \eqref{R_N I and II inter}, beginning with

\begin{equation} \label{R_n psi}
R_n[\psi_r] = \frac{1}{2} \Res_{t=t_0}\frac{h'(t) \beta_r(t) v(\bm{x}(t))}{p(t)^{n+\frac12}} = \sum_{j,\ell \geq 0} \tilde{\Psi}_{n}(j, \ell) A^{j}B^\ell v(\bm x_0),  
\end{equation}

\n where we have used the Taylor expansion \eqref{Taylor} with $\mu=0$ and introduced the notation

\begin{equation*}
\tilde{\Psi}_{n}(j, \ell) = \frac{1}{2 j! \ell!} \Res_{t=t_0}\frac{h' \beta_r \xi^j \eta^\ell}{p^{n+\frac12}} = \frac{(-1)^j}{2^{j+1} j! \ell!} \Res_{t=t_0}\frac{h' \beta_r \eta^\ell}{p^{n-j+\frac12}}. 
\end{equation*}

\n In the last step, we used the identity $\xi=-p/2$, which follows directly from the definitions of $\xi$ and $p$. Now, $p$ has a zero of order two at $t_0$, while $\eta$ has a zero of order one, and both $h'$ and $\beta_r$ are analytic in a neighborhood of $t_0$. Therefore, for the above residue to be nonzero, it is necessary that $2n-2j+1-\ell \geq 1$, or equivalently $2j + \ell \leq 2n$. Substituting \eqref{R_n psi} into the formula for $\CR_N^{II}$ in \eqref{R_N I and II inter}, we readily obtain the desired formula from Lemma~\ref{LEM R_N main} upon setting

\begin{equation*}
\Psi_{n}(j, \ell) = \frac{1}{r!} \Gamma \! \left( n + \tfrac{1}{2} \right) \tilde{\Psi}_{n}(j, \ell)
\end{equation*}

\n and recalling that $r=N-n$.

Let us now turn to the term $R_n[\varphi_r']$. To simplify it, we use the fact that the residue of a total derivative vanishes. Therefore,

\begin{equation*}
0 = \Res_{t=t_0} \left( \frac{\varphi_r}{p^{n+\frac12}} \right)' = \Res_{t=t_0} \frac{\varphi_r' p^{n+\frac12} - \left( n+\frac12\right) \varphi_r \, p^{n-\frac12} p' }{p^{2n+1}},
\end{equation*}

\n which implies

\begin{equation} \label{R_n phi prime}
R_n[\varphi_r'] = \Res_{t=t_0} \frac{\varphi_r'}{p^{n+\frac12}} = \left( n+\tfrac{1}{2}\right) \Res_{t=t_0} \frac{\varphi_r p'}{p^{n+\frac32}}. 
\end{equation}

\n Using $\varphi_r(t)=\alpha_r(t)Av(\bm{x}(t))$ and substituting the Taylor expansion of $Av$ from \eqref{Taylor} with $\mu=1$, we obtain

\begin{equation} \label{R_n phi_r'}
R_n[\varphi_r'] = \sum_{j,\ell \geq 0} \tilde{\Phi}_{n}(j, \ell) A^{j+1}B^\ell v(\bm x_0),
\end{equation}

\n where

\begin{equation*}
\tilde{\Phi}_{n}(j, \ell) = \frac{n+\frac{1}{2}}{j! \ell!} \Res_{t=t_0} \frac{\alpha_r \xi^j \eta^\ell p'}{p^{n + \frac{3}{2}}} = \left( n+\tfrac{1}{2}\right) \frac{(-1)^j}{2^j j! \ell!} \Res_{t=t_0} \frac{\alpha_r  \eta^\ell p'}{p^{n -j + \frac{3}{2}}}. 
\end{equation*}

\n Analogously, for the above residue to be nonzero it is necessary that $2n-2j+2- \ell \geq 1$ or equivalently, $2j+\ell \leq 2n+1$. It remains to substitute \eqref{R_n phi_r'} into the formula for $\CR_N^I$ in \eqref{R_N I and II inter}. This yields the desired formula in Lemma~\ref{LEM R_N main} upon setting

\begin{equation*}
\Phi_{n}(j, \ell) = \frac{1}{r!} \Gamma \! \left( n + \tfrac{1}{2} \right) \tilde{\Phi}_{n}(j, \ell).
\end{equation*}

\subsection{The coefficient $\CR_1$ and operator $L_1$} \label{SECT R_1}

Let us simplify the equation $\CR_1=0$ under the assumption that $\CR_0=0$. The latter is equivalent to $v(\bm{x}_0)=0$. By Lemma~\ref{LEM R_N main} and the analysis in Section~\ref{SECT proof of R_N as L_N}, we have already established that $\CR_1 = \CL_1 v(\bm{x}_0)$, where

\begin{equation*}
\CL_1 = a_{1,0} + a_{1,1} A + b_{1,1} B + b_{1,2} B^2.
\end{equation*}

\n Since $v(\bm{x}_0) = 0$, we may disregard the zeroth-order term $a_{1,0}$. Moreover, the coefficients $a_{1,1}$ and $b_{1,2}$ have already been computed in \eqref{a_N,N} and \eqref{b_N,2N}, respectively. More precisely,

\begin{equation*}
a_{1, 1} = - \left( \kappa^2-k^2 \right)  \frac{3 h_1 \Gamma \! \left( \tfrac{3}{2} \right)}{4 \tilde{p}_0^{\frac{1}{2}}},
\qquad \qquad
b_{1, 2} = \left( \kappa^2-k^2 \right)  \frac{h_1^3 \Gamma \! \left( \tfrac{3}{2} \right)}{16 \tilde{p}_0^{\frac{3}{2}}}.
\end{equation*}

\n Therefore, it remains only to compute the coefficient $b_{1,1}$. This can be done in exactly the same way as for the coefficients above, using the representation of $\CR_1$ from Lemma~\ref{LEM R_N main}. Specifically, we have

\begin{equation*}
b_{1,1} = \Psi_1(0,1) - k^2 \Phi_1(0, 2).
\end{equation*}

\n Using the definitions of $\Phi$ and $\Psi$ from Lemma~\ref{LEM R_N main}, the above expression becomes

\begin{equation*}
b_{1,1} = \frac{\Gamma \! \left( \tfrac{3}{2} \right)}{2} \left[ \kappa^2 \Res_{t=t_0} \frac{\eta h'}{p^{\frac{3}{2}}} - \frac{3k^2}{2} \Res_{t=t_0} \frac{\eta^2 p'}{p^\frac{5}{2}} \right].
\end{equation*}

\n First, we establish a simple relation between the two residues above. Using the product rule, we have

\begin{equation*}
\left( \frac{\eta^2}{p^\frac{3}{2}} \right)' = \frac{2\eta \eta'}{p^\frac{3}{2}} - \frac{3 \eta^2 p'}{2 p^\frac{5}{2}}.
\end{equation*}

\n Let us now take residues at $t=t_0$. The residue of the left-hand side vanishes, since it is a total derivative. Moreover, $2\eta'=h'$ by \eqref{eta}. Thus,

\begin{equation*}
\frac{3}{2} \Res_{t=t_0} \frac{\eta^2 p'}{p^\frac{5}{2}} = \Res_{t=t_0} \frac{\eta h'}{p^{\frac{3}{2}}}
\end{equation*}

\n and consequently

\begin{equation*}
b_{1,1} = (\kappa^2 - k^2) \frac{\Gamma \! \left( \tfrac{3}{2} \right)}{2} \Res_{t=t_0} \frac{\eta h'}{p^{\frac{3}{2}}}. 
\end{equation*}

\n To compute the above residue, we again use the Taylor expansions in \eqref{eta Taylor} and \eqref{p p ' Taylor}, except that we now need higher-order terms in these expansions. Set again $\tau=t-t_0$. Then, using

\begin{equation*}
\eta(t) = \frac{h_1}{2} \tau + \frac{h_2}{4} \tau^2 + O(\tau^3), 
\qquad \qquad
h'(t) = h_1 + h_2 \tau + O(\tau^2)
\end{equation*}

\n we obtain

\begin{equation*}
\eta(t)h'(t) = \frac{h_1^2}{2} \tau  + \frac{3}{4} h_1h_2 \tau^2 + O(\tau^3).
\end{equation*}

\n Next, recalling that $p(t)=\tau^2\tilde{p}(t)$ and setting $\tilde{p}_0=\tilde{p}(t_0)$ and $\tilde{p}_1=\tilde{p}'(t_0)$, we find

\begin{equation*}
\frac{1}{p^\frac{3}{2}(t)} = \frac{1}{\tau^3 \tilde{p}_0^\frac{3}{2}} \left[ 1 - \frac{3 \tilde{p}_1}{2 \tilde{p}_0} \tau + O(\tau^2)  \right].
\end{equation*}

\n Combining the last two equations, we obtain

\begin{equation*}
\frac{\eta(t)h'(t)}{p^\frac{3}{2}(t)} = \frac{1}{\tau^3 \tilde{p}_0^\frac{3}{2}} \left[ \frac{h_1^2}{2} \tau + \frac{3 h_1}{4}  \left( h_2 - \frac{h_1 \tilde{p}_1}{\tilde{p}_0} \right) \tau^2  +  O(\tau^3)  \right].
\end{equation*}

\n Now, since $p(t) = g(t_0) - g(t) = \tau^2 \tilde{p}(t)$ (see \eqref{p and p tilde}) we can express $3 \tilde{p}_1 / \tilde{p}_0 = g_3/g_2$, where  $g_n = g^{(n)}(t_0)$. This yields the desired residue and consequently

\begin{equation*}
b_{1,1} = (\kappa^2 - k^2) \frac{h_1 \Gamma \! \left( \tfrac{3}{2} \right)}{8 \tilde{p}_0^\frac{3}{2}} \left( 3 h_2 - \frac{h_1 g_3}{g_2} \right)
\end{equation*}

\n Finally, dividing the equation $\CL_1v(\bm{x}_0)=0$ by the nonzero constant (see \eqref{c_N})

\begin{equation*}
c_1 = \left( \kappa^2-k^2 \right)  \frac{h_1 \Gamma \! \left( \tfrac{3}{2} \right)}{16 \tilde{p}_0^{\frac{3}{2}}}
\end{equation*}

\n we obtain an equivalent equation, which is precisely the reduced equation $L_1v(\bm{x}_0) = 0$, i.e., the second equation in \eqref{1st and end cond v}.

\section*{Acknowledgements}
{The author would like to thank Fioralba Cakoni for fruitful discussions about nonscattering.}

\appendix

\section*{Appendix} \label{SECT appendix A}
\setcounter{subsection}{0}
\renewcommand{\thesubsection}{A.\arabic{subsection}}

\setcounter{equation}{0}
\renewcommand{\theequation}{A.\arabic{equation}}

The goal of this section is to prove Theorem~\ref{THM 1}, namely, to obtain a full asymptotic expansion of the integral $I(\lambda)$ as $\lambda\to+\infty$. In the first subsection below we show that the integral $I$, originally defined by \eqref{I cali}, can be reduced to the form \eqref{I cali G g}. In the second subsection, we analyze the reduced form and obtain the desired asymptotic expansion. 

\subsection{Rewriting the integral $I$}

Let us begin by recalling that $\Gamma$ is a simple closed curve parametrized counterclockwise by $\bm{x}(t)=(x_1(t),x_2(t))$ for $t\in[-\pi,\pi]$, and that $g=x_1+ix_2$ and $h=x_1-ix_2$. Further, $\bm{\zeta}(\lambda)=(-i\lambda,\sqrt{\lambda^2+\kappa^2})$ and $\rho(\epsilon)$ is defined by \eqref{rho}. Observe that setting $\epsilon=\lambda^{-1}$ in the definition of $\rho(\epsilon)$ yields

\begin{equation*}
\rho:= \rho(\lambda^{-1})=\sqrt{\lambda^2+\kappa^2}-\lambda.
\end{equation*}

\n Note that we are slightly abusing notation by continuing to denote the above quantity by $\rho$, as this will simplify the notation. Suppressing the $t$-dependence from the notation, we may write

\begin{equation*}
i\bm{x} \cdot\bm{\zeta}(\lambda) = \lambda x_1+i\sqrt{\lambda^2+\kappa^2} x_2 =\lambda\left(x_1+ix_2\right)+i\rho x_2 = g+i\rho x_2.
\end{equation*}

\n Let us set $' = \frac{d}{dt}$. Using the formula

\begin{equation*}
\bm n ds = \bigl(x_2',-x_1'\bigr) dt,
\end{equation*}

\n we can write the integral $I$ as follows:

\begin{equation} \label{I 1}
I(\lambda)=\int_{-\pi}^{\pi}\left\{x_2'\partial_{x_1}v - x_1' \partial_{x_2}v 
+ \left[i(\lambda+\rho)x_1' - \lambda x_2' \right] v\right\} e^{\lambda g+i\rho x_2}dt,
\end{equation}

\n where the function $v$ and all of its partial derivatives are evaluated at $\bm{x}(t)$, i.e., $v=v(\bm{x}(t))$, and similarly for the partial derivatives. To simplify the notation, in what follows, we will suppress the argument of $v$ and its partial derivatives, which will always be understood to be $\bm{x}(t)$. The chain rule implies that

\begin{equation*}
(v)' = x_1' \partial_{x_1}v + x_2' \partial_{x_2}v.
\end{equation*}

\n Consequently, as $A = \partial_{x_1} - i \partial_{x_2}$ we may write

\begin{equation}
x_2'\partial_{x_1}v - x_1' \partial_{x_2}v = -i g'Av+i(v)'.
\end{equation}

\n We now substitute the last expression into \eqref{I 1} and integrate the derivative term $(v)'$ by parts. Since $\Gamma$ is a closed curve there are no boundary terms and we obtain

\begin{equation*}
\begin{split}
I(\lambda)=&\int_{-\pi}^{\pi}\left\{-i g'Av 
+ \left[i(\lambda+\rho)x_1' - \lambda x_2' - i \left( \lambda g' + i \rho x_2' \right) \right] v\right\} e^{\lambda g+i\rho x_2}dt = 
\\
=& \int_{-\pi}^{\pi}\left\{-i g'Av 
+ i \rho h' v\right\} e^{\lambda g+i\rho x_2}dt,
\end{split}
\end{equation*}

\n where the last expression was obtained by simply combining the like terms inside the square brackets. We integrate the first term by parts once more. Note that

\begin{equation*}
\int_{-\pi}^{\pi} g'Av e^{\lambda g+i\rho x_2} dt = \frac{1}{\lambda} \int_{-\pi}^{\pi}  e^{i\rho x_2} Av \left( e^{\lambda g} \right)' dt = - \frac{1}{\lambda} \int_{-\pi}^{\pi} \left( e^{i\rho x_2} Av \right)' e^{\lambda g} dt.
\end{equation*}

\n But then

\begin{equation*}
I(\lambda)= \frac{i}{\lambda} \int_{-\pi}^{\pi} \left[  \left(  e^{i\rho x_2} Av \right)' + \lambda \rho h' e^{i\rho x_2}  \right]  e^{\lambda g} dt.
\end{equation*}

\n It remains to use the equation $(\lambda + \rho)^2 = \lambda^2 + \kappa^2$, which implies that $\lambda \rho = \frac{\kappa^2 - \rho^2}{2}$. Substituting this identity into the above formula completes the proof and shows that $I$ can be rewritten in the form \eqref{I cali G g}.

\subsection{Proof of Theorem~\ref{THM 1}}

Our starting point is the formula \eqref{I cali G g} for the integral $I$, which allows us to write $I(\lambda)=F(\lambda,\lambda^{-1})$, where

\begin{equation} \label{F}
F(\lambda, \epsilon) = \frac{i}{\lambda} \int_{-\pi}^\pi G(t, \epsilon) e^{\lambda g(t) }dt = \frac{i}{\lambda} \int_{\PC} G(t, \epsilon) e^{\lambda g(t) }dt.
\end{equation}

\n Here, in the second equation, we used the analyticity of the integrand to deform the contour of integration from $[-\pi,\pi]$ to $\PC$. Recall that $\PC$ is the contour joining $-\pi$ to $\pi$ and passing through the critical point $t_0$. Along this contour $\Re g$ attains its unique maximum at $t_0$ (see hypothesis (H3) of Section~\ref{SECT main results}).

To obtain the desired asymptotic expansion of $I(\lambda)$ we first obtain the asymptotic expansion of $F(\lambda,\epsilon)$ as $\lambda\to+\infty$ for fixed $\epsilon$, with the expansion being uniform in $\epsilon$. We then set $\epsilon=\lambda^{-1}$, so that the $\epsilon$-dependent coefficients in the expansion become functions of $\lambda^{-1}$. Expanding these coefficients in inverse powers of $\lambda$ and then rearranging and combining like terms, yields the desired asymptotic expansion of $I(\lambda)$. The key property of the function $G(t,\epsilon)$ in this regard is that it is analytic in the two complex variables $(t,\epsilon)\in\CC^2$ in the domain

\begin{equation*}
(t, \epsilon) \in \CT \times \mathbb{D}_{2\delta}, \qquad \qquad
\mathbb{D}_{2\delta} = \{\epsilon \in \CC: |\epsilon| < 2\delta\}.
\end{equation*}

\n Here $\delta>0$ is a small number (we choose the radius to be $2\delta$ for convenience) and recall that $\CT$ is defined in hypothesis (H1). Indeed, this follows from the form of the function $G(t,\epsilon)$ given by \eqref{G}, the fact that the $t$-dependent functions in its definition are all analytic in $\CT$ and the fact that $\rho(\epsilon)$, given by \eqref{rho}, is analytic in $\mathbb{D}_{2\delta}$. The latter holds because we may equivalently rewrite

\begin{equation*}
\rho(\epsilon) = \frac{\kappa^2 \epsilon}{\sqrt{1+\kappa^2 \epsilon^2} + 1},
\end{equation*}

\n which shows that $\rho$ is analytic in a sufficiently small disk $\mathbb{D}_{2\delta}$ (we choose the branch of the square root satisfying $\sqrt{1}=1$).

Now for each fixed $\epsilon\in\mathbb{D}_\delta$, \eqref{F} is an integral to which the method of steepest descent applies directly and yields the full asymptotic expansion. Specifically, \S2.4(iv) of \cite{DLMF} implies that for any $N_0=1,2,...$, as $\lambda\to+\infty$,

\begin{equation} \label{F asymp}
F(\lambda, \epsilon) = \frac{2i}{\lambda^\frac{3}{2}} e^{\lambda g(t_0)} \left[ \sum_{n=0}^{N_0-1} \Gamma \! \left(n + \tfrac{1}{2}\right)
\frac{b_{2n}(\epsilon)}{\lambda^{n}} + O(\lambda^{-N_0}) \right],
\end{equation}

\n where $\Gamma$ denotes the Gamma function and the coefficients of the expansion admit the following residue representation:

\begin{equation} \label{b_2n Res}
b_{2n}(\epsilon) = \frac12 \Res_{t=t_0} \frac{G(t, \epsilon)}{\left[g(t_0)-g(t) \right]^{n + \frac{1}{2}}}.
\end{equation}

\n The key property is that the expansion \eqref{F asymp} is uniform in $\epsilon\in\mathbb{D}_\delta$, i.e., the big-$O$ term in square brackets does not depend on $\epsilon$, but rather on $\delta$. To see this, first let $K\subset\CT$ be a compact set containing the contour $\PC$. Note that the contour $\PC$ and the phase function $g$ do not depend on $\epsilon$. Furthermore, by the analyticity of $G$ and Cauchy's integral formula for derivatives we obtain that for any $j=0,1,...$, the partial derivative $\partial_t^jG(t,\epsilon)$ is uniformly bounded in $\epsilon$. More precisely, there exists $M_j>0$ such that

\begin{equation*}
\sup_{\substack{t \in K \\ |\epsilon| \leq \delta} } |\partial_t^jG(t, \epsilon)| \leq M_j < \infty.
\end{equation*}

\n This provides the desired uniformity in $\epsilon$ of the expansion \eqref{F asymp}. Now, $b_{2n}(\epsilon)$ is analytic in a neighborhood of $\epsilon=0$ (this follows directly from the integral representation of the residue). Hence, we may expand it in a Taylor series:

\begin{equation*}
b_{2n}(\epsilon) = \sum_{m=0}^\infty b_{2n, m} \epsilon^m, \qquad \qquad 
b_{2n, m} = \frac{1}{m!} b_{2n}^{(m)}(0).
\end{equation*}

\n To obtain the expansion of $I(\lambda)=F(\lambda,\lambda^{-1})$, we take $\lambda$ sufficiently large so that $\lambda^{-1}<\delta$ and \eqref{F asymp} holds. We then use the above Taylor expansion up to order $N_0-1-n$ so that, after dividing both sides by $\lambda^n$,

\begin{equation*}
\frac{b_{2n}(\lambda^{-1})}{\lambda^n} = \sum_{m=0}^{N_0-1-n} \frac{b_{2n, m}}{\lambda^{m+n}} + O(\lambda^{-N_0}).
\end{equation*}

\n Substituting the latter into \eqref{F asymp} gives

\begin{equation*}
\begin{split}
F(\lambda, \lambda^{-1}) =& \frac{2i}{\lambda^\frac{3}{2}} e^{\lambda g(t_0)} \left[ \sum_{n=0}^{N_0-1} \Gamma \! \left(n + \tfrac{1}{2}\right)
\sum_{m=0}^{N_0-1-n} \frac{b_{2n, m}}{\lambda^{m+n}} + O(\lambda^{-N_0}) \right] =
\\[.1in]
=& \frac{2i}{\lambda^\frac{3}{2}} e^{\lambda g(t_0)} \left[ \sum_{N=0}^{N_0-1} \frac{1}{\lambda^N} \sum_{n=0}^{N} \Gamma \! \left(n + \tfrac{1}{2}\right) b_{2n, N-n} + O(\lambda^{-N_0}) \right].
\end{split}
\end{equation*}

\n The above formula holds for arbitrary $N_0$, therefore we obtain the full expansion

\begin{equation*}
I(\lambda) \sim \frac{i}{\lambda^\frac{3}{2}} e^{\lambda g(t_0)} \sum_{N=0}^{\infty} \frac{\CR_N}{\lambda^N}, \qquad \qquad
\CR_N = 2\sum_{n=0}^{N} \Gamma \! \left(n + \tfrac{1}{2}\right) b_{2n, N-n}.
\end{equation*}

\n To conclude the proof, it remains to note that this is precisely the formula \eqref{R_n coeff} stated in Theorem~\ref{THM 1}. Indeed, $b_{2n,m}$ is defined as the $m$-th derivative of $b_{2n}(\epsilon)$, and we can move this $\epsilon$-derivative onto $G(t,\epsilon)$ inside the residue in \eqref{b_2n Res}. This follows by representing the residue as an integral and passing the $\epsilon$-derivative under the integral sign, using the uniformity in $\epsilon$.

\bibliographystyle{abbrv}
\bibliography{ref}

\end{document}